\def\titlerunning#1{\gdef\titrun{#1}}
\def\author#1{\gdef\autrun{\def\and{\unskip, }#1}\gdef\@author{#1}}
\def\address#1{{\def\and{\\\hspace*{18pt}}\renewcommand{\thefootnote}{}%
\footnote {#1}}% 
\markboth{\autrun}{\titrun}}
\def\email#1{\hspace*{4pt}{\em e-mail}: #1}
\def\MSC#1{{\renewcommand{\thefootnote}{}%
\footnote{\emph{Mathematics Subject Classification (2010):} #1}}}
\def\keywords#1{\par\medskip
\noindent\textbf{Keywords:} #1}
\newtheorem{theorem}{Theorem}[section]
\newtheorem{prop}[theorem]{Proposition}
\newtheorem{cor}[theorem]{Corollary}
\newtheorem{lemma}[theorem]{Lemma}
\newtheorem{defin}[theorem]{Definition}
\theoremstyle{definition}
\newtheorem{remark}[theorem]{Remark}
\numberwithin{equation}{section}
\def\cL{\mathcal L}
\def\cA{\mathcal A}
\def\cC{\mathcal C}
\def\cB{\mathcal B}
\def\cD{\mathcal D}
\def\cE{\mathcal E}
\def\cI{\mathcal I}
\def\cO{\mathcal O}
\def\cP{\mathcal P}
\def\cX{\mathcal X}
\def\cY{\mathcal Y}
\def\cT{\mathcal T}
\def\cS{\mathcal S}
\def\cQ{\mathcal Q}
\def\PG{{\rm PG}}
\def\GF{{\rm GF}}
\def\PGL{{\rm PGL}}
\def\PSL{{\rm PSL}}
\def\GL{{\rm GL}}
\begin{document}

%%%%% To ease editing, add:

\baselineskip=16pt

%%%%%%%%%%%%%%%%
%% In the running head, give an abbreviation of the title. 
\titlerunning{}

\title{Cameron--Liebler line classes of $\PG(3,q)$ admitting $\PGL(2,q)$}

\author{Antonio Cossidente
\and
Francesco Pavese}

\date{}

\maketitle

\address{A. Cossidente: Dipartimento di Matematica, Informatica ed Economia, Universit{\`a} degli Studi della Basilicata, Contrada Macchia Romana, 85100, Potenza, Italy;
\email{antonio.cossidente@unibas.it}
\and
F. Pavese: Dipartimento di Meccanica, Matematica e Management, Politecnico di Bari, Via Orabona 4, 70125 Bari, Italy; 
\email{francesco.pavese@poliba.it}}

\bigskip

\MSC{Primary  51E20; Secondary  05B25 05E30}

%%%%%%%%

\begin{abstract}
In this paper we describe an infinite family of Cameron--Liebler line classes of $\PG(3,q)$ with parameter $(q^2+1)/2$, $q \equiv 1 \pmod{4}$. The example obtained admits $\PGL(2,q)$ as an automorphism group and it is shown to be isomorphic to none of the infinite families known so far whenever $q \ge 9$. 

\keywords{Cameron--Liebler line class; tight set; Klein quadric.}
\end{abstract}

\section{Introduction}\label{sect:intro}

Let $\PG(3,q)$ be the three dimensional projective space over $\GF(q)$. A {\em spread} of $\PG(3,q)$ is a set of $q^2+1$ pairwise disjoint lines. The subject of this paper is the study of certain Cameron--Liebler line classes of $\PG(3,q)$. 
	\begin{defin}[\cite{CL}, \cite{P}]	
In $\PG(3,q)$, a {\em Cameron--Liebler line class} $\cal L$  with parameter $x$ is a set of lines such that every spread of $\PG(3,q)$ contains exactly $x$ lines of $\cal L$.
	\end{defin}
The notion of Cameron--Liebler line class was introduced in the seminal paper \cite{CL} in order to classify those collineation groups of $\PG(3,q)$ having the same number of orbits on points and lines. On the other hand, a classification of Cameron--Liebler line classes would yield a classification of symmetric tactical decompositions of points and lines of $\PG(n,q)$ and that of certain families of weighted point sets of $\PG(3,q)$ \cite{CL},\cite{PJC}, \cite{P}. Cameron--Liebler line classes are also related to other combinatorial structures such as two--intersection sets, strongly regular graphs and projective two--weight codes \cite{BKLP}, \cite{MR}. Recently in \cite{FI}, it has been pointed out that Cameron--Liebler line classes of $\PG(3,q)$ are equivalent to other well studied objects, namely the Boolean degree $1$ functions on the Grassmann graph $J_q(4,2)$. 

The size of a  Cameron--Liebler line class $\cal L$ equals $x(q^2+q+1)$ and the number of lines of $\cL$ intersecting a given line $\ell$ only depends on whether $\ell$ belongs to $\cL$ or does not. Note that, under the Klein correspondence between the lines of $\PG(3,q)$  and points of a Klein quadric ${\cal Q}^+(5,q)$,  a Cameron--Liebler line class with parameter $i$ is equivalent to a so--called {\em $i$--tight set} of ${\cal Q}^+(5,q)$.
	\begin{defin}
A set $\cT$ of points of ${\cal Q}^+(5,q)$ is said to be {\em $i$--tight} if
$$
\vert P^\perp\cap{\cal T}\vert=
\begin{cases} i(q+1)+q^2 & if \quad P\in {\cal T}\\
i(q + 1) & if \quad P\not\in {\cal T}\end{cases},
$$
where $\perp$ denotes the polarity of $\PG(5,q)$ associated with ${\cal Q}^+(5,q)$. 
	\end{defin}	
For details and results on tight sets of polar spaces, see \cite{BKLP}. There exist trivial examples of Cameron--Liebler line classes $\cal L$ with parameters $x=1,2$ and $x=q^2,q^2-1$. A Cameron--Liebler line class with parameter $x=1$ is either the set of lines through a point or the set of lines in a plane. A Cameron--Liebler line class with parameter $x=2$ is the union of the two previous examples, if the point is not in the plane \cite{CL}, \cite{P}. In general, the complement of a Cameron--Liebler line class with parameter $x$ is a Cameron--Liebler line class with parameter $q^2+1-x$ and the union of two disjoint Cameron--Liebler line classes with parameters $x$ and $y$, respectively, is a Cameron--Liebler line class with parameter $x+y$. 

It was conjectured that no other examples of Cameron--Liebler line classes exist \cite{CL}. Bruen and Drudge \cite{BD} were the first to find a counterexample, namely an infinite family of Cameron--Liebler line classes with parameter $x=(q^2+1)/2$, $q$ odd. See Section \ref{known} for a description of the other known infinite families of Cameron--Liebler line classes with parameter $(q^2+1)/2$. For non--existence and classification results of Cameron--Liebler line classes we refer the reader to \cite{GM}, \cite{M}, \cite{GM1}. 

In this paper we will introduce a new derivation technique for Cameron--Liebler line classes with parameter $(q^2+1)/2$, which generalizes \cite[Theorem 3.9]{CP1}, see Theorem \ref{cl}. Applying such a derivation to the Bruen--Drudge's example, we construct a new family of Cameron--Liebler line classes with parameter $(q^2+1)/2$, $q \equiv 1 \pmod{4}$, $q\ge 9$ odd, not equivalent to the examples known so far and admitting an automorphism group isomorphic to $\PGL(2,q)$.

Throughout the paper $q$ is a power of an odd prime.

\section{The known examples of Cameron--Liebler line classes with parameter $\frac{q^2+1}{2}$}\label{known}

Up to date, the following infinite families of Cameron--Liebler line classes with parameter $(q^2+1)/2$ are known: 
\begin{enumerate}
\item[1)] the Bruen--Drudge's family \cite{BD}, admitting the group $K \simeq \PSL(2,q^2)$ stabilizing an elliptic quadric $\cQ^-(3,q)$ of $\PG(3,q)$, $q$ odd.

Let $\cQ^-(3,q)$ be an elliptic quadric of $\PG(3,q)$ with quadratic form $Q$. The points of $\PG(3,q) \setminus \cQ$ can be partitioned in two sets of equal size, say $O_s$, $O_n$, corresponding to points of $\PG(3,q)$ such that the evaluation of the quadratic form $Q$ is a non--zero square or a non--square in $\GF(q)$, respectively. A tangent line to $\cQ^-(3,q)$ contains either $q$ points of $O_s$ or $q$ points of $O_n$. In particular, the set of lines that are tangent to $\cQ^-(3,q)$ can be partitioned in two sets of equal size, say $\cT_s$, $\cT_n$, where a tangent $t$ belongs to $\cT_s$ or $\cT_n$, depending on $t$ contains $q$ points of $O_s$ or $q$ points of $O_n$. Let $\cS_1$, $\cS_2$ be the set of lines that are external or secant to $\cQ^-(3,q)$, respectively. Then gluing together one set among $\cT_s$, $\cT_n$ and one set among $\cS_1$, $\cS_2$, an example of Cameron--Liebler line class constructed by Bruen and Drudge in \cite{BD} is obtained. In particular, if ${\cal L'} = \cT_s \cup \cS_1$, then $\cL'$ has the following three characters with respect to line--sets in planes of $\PG(3,q)$: 
$$
\frac{q^2+q}{2}-q, \frac{q^2+q}{2}+1, q^2+\frac{q+1}{2}, 
$$
and 
$$
\frac{q+1}{2}, \frac{q^2+q}{2}, \frac{q^2+q}{2}+q+1,
$$
with respect to line--stars of $\PG(3,q)$.

\item[2)] the first family derived from Bruen--Drudge \cite{CP}, \cite{GMP}, admitting the stabilizer $K'$ of a point of $\cQ^-(3,q)$ in $K$, $q \ge 5$ odd.

Consider a point $R$ of $\cQ^-(3,q)$ and let $\rho$ be the tangent plane to $\cQ^-(3,q)$ at the point $R$. Let $\cL''$ be the line--set of $\PG(3,q)$ obtained from $\cL'$, by replacing the $q^2$ lines of $\cS_1$ contained in $\rho$ with the $q^2$ lines of $\cS_2$ passing through $R$. Then ${\cal L}''$ is again a Cameron--Liebler line class with parameter $(q^2+1)/2$. In particular, ${\cal L}''$ has the following five characters with respect to line--sets in planes of $\PG(3,q)$:
$$
\frac{q+1}{2}, \frac{q^2+q}{2}-(q+1), \frac{q^2+q}{2}, \frac{q^2+q}{2}+q+1, q^2+\frac{q-1}{2},
$$
and 
$$
\frac{q+3}{2}, \frac{q^2+q}{2}-q, \frac{q^2+q}{2}+1, \frac{q^2+q}{2}+q+2, q^2+\frac{q+1}{2},
$$ 
with respect to line--stars of $\PG(3,q)$. It turns out that, if $q >3$, these characters are distinct from those of a Bruen--Drudge Cameron--Liebler line class.

\item[3)] the second family derived from Bruen--Drudge \cite{CP1}, say $\cL'''$, admitting a subgroup of $K'$ of order $q^2(q+1)$, $q \ge 7$ odd.

Here the existence of a pencil of elliptic quadrics fixed by a subgroup of $K'$ of order $q^2(q+1)$ plays a crucial role and the derivation is similar to the previous example with a more restrictive selection of tangent lines to the elliptic quadrics of the pencil. The characters of the Cameron--Liebler line class $\cL'''$ with respect to line--sets in planes of $\PG(3,q)$ form a subset of:
$$
\left\{ q^2 + \frac{q+1}{2}, q^2 - \frac{3(q+1)}{2}, \frac{q^2+q}{2}+2q+3, \frac{q^2+q}{2}+q+2, \right.
$$
$$
\left. \frac{q^2+q}{2}+1, \frac{q^2+q}{2}-q, \frac{q^2+q}{2}-2q-1, \frac{q^2+q}{2}-2(q+1) \right\} .
$$ 
and with respect to line--stars of $\PG(3,q)$ form a subset of:
$$
\left\{ \frac{q+1}{2}, \frac{5(q+1)}{2}, \frac{q^2+q}{2}-2(q+1), \frac{q^2+q}{2}-(q+1), \right.
$$
$$
\left. \frac{q^2+q}{2}, \frac{q^2+q}{2}+q+1, \frac{q^2+q}{2}+2(q+1), \frac{q^2+q}{2}+3(q+1) \right\},
$$ 
\item[4)] the ``cyclic'' family \cite{DDMR}, \cite{FMX}, admitting a group of order $3(q-1)(q^2+q+1)/2$, $q \equiv 5$ or $9 \pmod{12}$. 

Infinite families of Cameron--Liebler line classes with parameter $(q^2-1)/2$ were found for $q \equiv 5$ or $9\pmod{12}$ in \cite{DDMR}, \cite{FMX}. By construction, for a line class $\cal X$ of such a family there is a fixed plane $\Pi$ and a fixed point $z\not\in\Pi$ such that $\cX$ never contains the lines $\cal Y$ of the plane $\Pi$ and the lines $\cal Z$ through the point $z$. Therefore, ${\cal X}\cup{\cal Y}$ and ${\cal X}\cup{\cal Z}$  are both examples of  Cameron--Liebler line classes with parameter $(q^2+1)/2$. In particular both examples $\cX \cup \cY$ and $\cX \cup {\cal Z}$ admit $q^2+q+1$ as a character.
\end{enumerate}

\section{A new family}

\subsection{The geometric setting} \label{setting}

Let $X_1,\dots, X_4$ be homogeneous projective coordinates in $\PG(3,q)$. Let $\cQ_{\lambda}$ be the quadric with equation $X_1X_3 - X_2^2 + \lambda X_4^2 = 0$, $\lambda\in\GF(q)$. Then the non--degenerate quadrics $\cQ_{\lambda}$, $\lambda\in\GF(q)$, together with the plane $\pi$ satisfying the equation $X_4=0$, form a pencil $\cal P$ of $\PG(3,q)$. The base locus of $\cal P$ is the conic $\cC = \{(1,t,t^2,0) \;\; | \;\; t \in \GF(q)\} \cup \{U_3\}$, where $U_i$ denotes the points having $1$ in the $i$-th position and $0$ elsewhere. The quadric $\cQ_\lambda$ is elliptic or hyperbolic according as $\lambda$ is a non--square or a non--zero square in $\GF(q)$, while $\cQ_0$ is a quadratic cone having as vertex the point $U_4$ and as base the conic $\cC$. If $\lambda \ne 0$, let $\perp_{\lambda}$ be the orthogonal polarity associated to $\cQ_\lambda$. Note that $U_4^{\perp_{\lambda}}=\pi$, $\forall \lambda \in \GF(q) \setminus \{0\}$. 

There exists a group $G$ of order $q^3-q$ that is isomorphic to $\PGL(2,q)$ which stabilizes each of the quadrics of the pencil $\cP$. Here and in the sequel we shall find it helpful to work with the elements of $\PGL(4, q)$ as matrices in $\GL(4, q)$. We shall consider the points of $\PG(3,q)$ as column vectors, with matrices acting on the left.

\begin{lemma}\label{pencil}
The group $G$ given by
$$
M = \left(
\begin{array}{cccc}
a^2 & 2ac & c^2 & 0 \\
ab & ad+bc & cd & 0 \\
b^2 & 2bd & d^2 & 0 \\
0 & 0 & 0 & ad-bc \\
\end{array}
\right),
$$
with $a, b, c, d \in \GF(q)$, $ad - bc \neq 0$, stabilizes $\cQ_\lambda$. 
\end{lemma}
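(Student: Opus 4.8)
The plan is to realise $\cQ_\lambda$ as the zero locus of the symmetric bilinear form it defines and to show that $M$ rescales this form by a nonzero constant. Write $Q_\lambda(X) = X_1X_3 - X_2^2 + \lambda X_4^2 = X^\top A X$, where $A$ is the Gram matrix with $A_{13}=A_{31}=\tfrac12$, $A_{22}=-1$, $A_{44}=\lambda$ and zeros elsewhere (this makes sense since $q$ is odd). It then suffices to prove that
$$
M^\top A M = \mu\, A \qquad \text{for some } \mu \in \GF(q)\setminus\{0\},
$$
because $\mu\neq 0$ forces $Q_\lambda(MX)=\mu\,Q_\lambda(X)$, so the point set $\cQ_\lambda$ is preserved. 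First I would exploit the block structure of $M$: it is block diagonal, acting on $X_4$ by the scalar $ad-bc$ and on $(X_1,X_2,X_3)$ by the upper-left block
$$
B=\begin{pmatrix} a^2 & 2ac & c^2 \\ ab & ad+bc & cd \\ b^2 & 2bd & d^2 \end{pmatrix}.
$$
Since $A$ is also block diagonal (positions $\{1,2,3\}$ and $\{4\}$ decouple), so is $M^\top A M$, and the two blocks can be treated separately.

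The key observation is that $B$ is the symmetric-square (Veronese) representation of $\left(\begin{smallmatrix} a & c \\ b & d\end{smallmatrix}\right)\in\GL(2,q)$: substituting $x_0\mapsto ax_0+cx_1$, $x_1\mapsto bx_0+dx_1$ into the monomials $(x_0^2,x_0x_1,x_1^2)=(X_1,X_2,X_3)$ produces exactly the action $X\mapsto BX$, in accordance with the conic $\cC$ being the Veronese image of $\PG(1,q)$. Under this identification $X_1X_3-X_2^2$ is the discriminant of the binary form $X_1u^2+2X_2uv+X_3v^2$, whose Gram matrix transforms by $P^\top(\cdot)P$ under a substitution $P$; hence the discriminant is multiplied by $\det(P)^2=(ad-bc)^2$. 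Therefore $B^\top A_3 B=(ad-bc)^2 A_3$, where $A_3$ is the $3\times 3$ block of $A$. For the remaining coordinate, $M$ sends $X_4$ to $(ad-bc)X_4$, so $\lambda X_4^2$ becomes $\lambda(ad-bc)^2 X_4^2$, contributing the very same scalar.

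Collecting both blocks gives $M^\top A M=(ad-bc)^2 A$, i.e. $\mu=(ad-bc)^2\neq 0$, which proves that $M$ stabilises $\cQ_\lambda$ for every $\lambda$; the value of $\mu$ is independent of $\lambda$, so the whole pencil $\cP$ is stabilised simultaneously. I expect no genuine obstacle here. The only real work is the discriminant-scaling identity for $B$, which one can either invoke through the Veronese interpretation above — letting the classical invariance do the bookkeeping — or verify directly by a short computation confirming that the off-diagonal entries of $M^\top A M$ vanish and that the surviving entries all carry the common factor $(ad-bc)^2$. The representation-theoretic route is the cleaner of the two and avoids grinding through the $4\times 4$ product by hand.
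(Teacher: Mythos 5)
Your argument is correct. There is nothing to compare it against in the strict sense: the paper states Lemma \ref{pencil} with no proof at all, treating the verification as routine. Your route is the natural conceptual one, and it checks out: the upper-left block $B$ is indeed the symmetric-square (Veronese) action of $N=\left(\begin{smallmatrix} a & c\\ b & d\end{smallmatrix}\right)$ on $(x_0^2,x_0x_1,x_1^2)$, and a direct computation confirms $B^\top A_3 B=(ad-bc)^2A_3$ (e.g.\ the $(2,2)$ entry is $4abcd-(ad+bc)^2=-(ad-bc)^2$ and the $(1,3)$ entry is $\tfrac12(a^2d^2+b^2c^2)-abcd=\tfrac12(ad-bc)^2$), while the last coordinate contributes $\lambda(ad-bc)^2X_4^2$; since $ad-bc\neq 0$, $M$ is invertible and preserves the zero locus of $Q_\lambda$ for every $\lambda$. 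One small bookkeeping caveat: identifying a point $(X_1,X_2,X_3)$ with the rank-one tensor $xx^\top$ gives the action $S\mapsto NSN^\top$ on Gram matrices rather than $P^\top SP$ (the contragredient action on forms); both scale the determinant by $\det(N)^2$, so your conclusion is unaffected, but the sentence as written conflates the two. A bonus of your viewpoint, worth stating explicitly, is that $N\mapsto M$ is visibly a homomorphism from $\GL(2,q)$ whose kernel is the scalars, which gives at once that $G\simeq\PGL(2,q)$, that $|G|=q^3-q$, and that $G$ acts on $\pi$ as $\PGL(2,q)$ on the Veronese conic $\cC$ --- facts the paper asserts around the lemma and uses in Remark \ref{tra}. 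The independence of the scalar $(ad-bc)^2$ from $\lambda$, which you point out, is exactly why the whole pencil $\cP$ is stabilized simultaneously.
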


\begin{remark}\label{tra}
The group $G$ acts faithfully on points of $\pi$. Hence $G$ is $3$--transitive on points of the conic $\cC$ and transitive on points of $\pi$ that are external or internal to $\cC$. Dually, $G$ is $3$--transitive on lines of $\pi$ that are tangent to $\cC$ and transitive on lines of $\pi$ that are secant or external to $\cC$.  
\end{remark}

\begin{lemma}\label{point}
The group $G$ has $q+4$ orbits on points of $\PG(3,q)$:
\begin{enumerate}
\item[1)] The point $U_4$,
\item[2)] the plane $\pi$ is partitioned into three $G$--orbits:
\begin{itemize}
\item the conic $\cC$,
\item an orbit $\cI$ of size $q(q-1)/2$ consisting of internal points,
\item an orbit $\cE$ of size $q(q+1)/2$ consisting of external points,
\end{itemize}
\item[3)] an orbit of size $q^2-1$ consisting of points of $\cQ_0 \setminus (\cC \cup \{U_4\})$,
\item[4)] $(q-1)/2$ orbits of size $q^2+q$ consisting of points of $\cQ_\lambda \setminus \cC$, $\lambda$ a non--zero square in $\GF(q)$,
\item[5)] $(q-1)/2$ orbits of size $q^2-q$ consisting of points of $\cQ_\lambda \setminus \cC$, $\lambda$ a non--square in $\GF(q)$.
\end{enumerate}
\end{lemma}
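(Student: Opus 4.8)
The goal is to determine the $G$-orbits on points of $\PG(3,q)$, where $G \cong \PGL(2,q)$ is the group from Lemma \ref{pencil}. The plan is to organize the analysis around the pencil $\cP$: since every matrix $M \in G$ stabilizes each quadric $\cQ_\lambda$ of the pencil (by Lemma \ref{pencil}), each quadric is a union of $G$-orbits, and so is each point-class of the pencil. The decomposition of $\PG(3,q)$ I would use is: the vertex $U_4$; the base plane $\pi$; the cone $\cQ_0 \setminus (\cC \cup \{U_4\})$; and, for each $\lambda \neq 0$, the set $\cQ_\lambda \setminus \cC$. Every point of $\PG(3,q)$ lies on exactly one quadric $\cQ_\lambda$ (for a unique $\lambda$) unless it lies in $\pi$ or equals $U_4$, so these pieces partition the space and each is $G$-invariant.

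**Handling the pieces.**

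First I would dispose of the easy invariants. The point $U_4$ is fixed by every $M$ (the last column of $M$ is $(0,0,0,ad-bc)^{t}$, and the first three entries of the last row vanish), giving orbit (1). For the plane $\pi$ (orbit-type (2)), I would invoke Remark \ref{tra}: $G$ acts faithfully on $\pi$ and induces there the natural action of $\PGL(2,q)$ on the conic $\cC$ together with its internal and external points. The $3$-transitivity on $\cC$ gives the conic as a single orbit of size $q+1$, and the standard partition of the remaining $q^2-1$ points of $\pi$ into $q(q-1)/2$ internal and $q(q+1)/2$ external points (each a single orbit) follows from the transitivity statements in Remark \ref{tra}. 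Counting these sizes is a routine verification.

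**The main quadrics.**

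The substantive part is orbit-types (3), (4), (5), i.e.\ the action on each $\cQ_\lambda \setminus \cC$. Here the key tool is the orbit--stabilizer theorem together with a transitivity argument. Since $G$ fixes $\cQ_\lambda$ setwise and acts on it, I would first show $G$ acts transitively on $\cQ_\lambda \setminus \cC$ for each \emph{fixed} $\lambda \neq 0$; the orbit size then equals $|G|/|G_P|$ for a point-stabilizer $G_P$. A clean way to get transitivity is to restrict attention to a convenient point of $\cQ_\lambda$, say a point of the form $(0,0,0,1)$ shifted onto the quadric, and to exhibit enough matrices $M$ moving it around; alternatively one computes the stabilizer directly. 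For $\lambda$ a non-zero square the quadric $\cQ_\lambda$ is hyperbolic and $|\cQ_\lambda| = (q+1)^2$, so $|\cQ_\lambda \setminus \cC| = (q+1)^2-(q+1) = q^2+q$; for $\lambda$ a non-square $\cQ_\lambda$ is elliptic with $|\cQ_\lambda| = q^2+1$, giving $|\cQ_\lambda \setminus \cC| = q^2-q$. For the cone $\cQ_0$, one has $|\cQ_0| = q^2+q+1$ (a quadratic cone with vertex $U_4$), and removing $\cC$ and $U_4$ leaves $q^2-1$ points, matching orbit (3). The remaining point is that \emph{distinct} square values $\lambda$ (respectively distinct non-square values) give \emph{distinct} orbits: this is immediate because the $\cQ_\lambda$ are pairwise disjoint off $\cC$, so there are $(q-1)/2$ orbits of each kind.

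**Anticipated obstacle.**

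The main obstacle will be proving transitivity of $G$ on $\cQ_\lambda \setminus \cC$ for fixed $\lambda$, and confirming that this action does not split into smaller orbits. Once transitivity is established the counts follow from the orbit--stabilizer theorem and the known cardinalities of hyperbolic, elliptic, and conical quadrics, so the real work is the transitivity computation. I expect this to reduce to showing that the induced action of $\PGL(2,q)$ on the points of $\cQ_\lambda$ off the base conic is transitive---geometrically, $G$ preserves the conic $\cC$ (the base locus) on each quadric and acts as $\PGL(2,q)$ there, and lifting transitivity from $\cC$ to the full quadric minus $\cC$ requires checking that the kernel $ad-bc$ acting on the $X_4$-coordinate provides the extra freedom needed to reach every point. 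Verifying that the orbit sizes sum correctly, namely $1 + (q+1) + \tfrac{q(q-1)}{2} + \tfrac{q(q+1)}{2} + (q^2-1) + \tfrac{q-1}{2}(q^2+q) + \tfrac{q-1}{2}(q^2-q) = q^3+q^2+q+1 = |\PG(3,q)|$, provides a useful consistency check that I would carry out at the end.
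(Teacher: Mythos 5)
Your plan coincides with the paper's proof: parts 1) and 2) are dispatched via Remark \ref{tra}, and for each $\cQ_\lambda\setminus\cC$ the paper takes an explicit point $P$, bounds $Stab_G(P)$ inside the stabilizer of the point where the line $U_4P$ meets $\pi$, computes $|Stab_G(P)|$ exactly, and concludes transitivity because the orbit size $|G|/|Stab_G(P)|$ equals the cardinality of the invariant set --- precisely your ``compute the stabilizer directly'' alternative. The approach is essentially identical, the only cosmetic difference being that the paper deduces transitivity from the stabilizer computation rather than establishing it first.
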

\begin{proof}
It is straightforward to prove {\em 1)} and {\em 2)}, see also Remark \eqref{tra}. Let $P = (1,0,0,\alpha) \in \cQ_0$, for some $\alpha \in \GF(q) \setminus \{0\}$. Then $P^M = P$ implies $b = 0$ and $a = d$. Hence $Stab_G(P) = \eta$, where
$$
\eta = \left\{ \left(\begin{array}{cccc}  1 & 2x & x^2 & 0 \\ 0 & 1 & x & 0 \\ 0 & 0 & 1 & 0 \\ 0 & 0 & 0 & 1 \end{array}\right) \;\; | \;\; x \in \GF(q) \right\} 
$$
and $|Stab_G(P)| = q$. It follows that $|P^G| = q^2-1 = |{\cQ_0} \setminus ({\cC} \cup \{U_4\})|$. Let $\lambda$ be a non--zero square in $\GF(q)$ and let $P = (0, \sqrt{\lambda},0,1) \in \cQ_\lambda$. Since the line joining $U_4$ and $P$ meets the plane $\pi$ in $U_2$, we have that $Stab_G(P) \le Stab_G(U_2)$. Note that $Stab_G(U_2)$ is a dihedral group of order $2(q-1)$ generated by 
$$
\phi = \left\{ \left(\begin{array}{cccc}  1 & 0 & 0 & 0 \\ 0 & d & 0 & 0 \\ 0 & 0 & d^2 & 0 \\ 0 & 0 & 0 & d \end{array}\right) \;\; | \;\; d \in \GF(q) \setminus \{0\} \right\}, \;\; \tau_1 =  \left(\begin{array}{cccc}  0 & 0 & 1 & 0 \\ 0 & 1 & 0 & 0 \\ 1 & 0 & 0 & 0 \\ 0 & 0 & 0 & -1 \end{array}\right) .
$$
On the other hand $\tau_1$ does not fix $P$ and hence $Stab_G(P) = \phi$. Therefore $|Stab_G(P)| = q-1$ and $|P^G| = q^2+q = |{\cQ_\lambda} \setminus {\cC}|$. Analogously, if $\lambda$ is a non--square in $\GF(q)$ and $P = (-s, 0, 1, \sqrt{s/\lambda}) \in \cQ_\lambda$, where $s$ is a non--square in $\GF(q)$, we have that the line joining $U_4$ and $P$ meets the plane $\pi$ in the point $R = (-s, 0,1,0)$ and again $Stab_G(P) \le Stab_G(R)$. Note that $Stab_G(R)$ is a dihedral group of order $2(q+1)$ generated by 
$$
\psi = \left\{ \left(\begin{array}{cccc}  a^2 & 2sab & s^2b^2 & 0 \\ ab & a^2+sb^2 & sab & 0 \\ b^2 & 2ab & a^2 & 0 \\ 0 & 0 & 0 & a^2-sb^2 \end{array}\right) \;\; | \;\; a,b \in \GF(q), a^2-sb^2 = \pm 1 \right\}, 
$$
$$
\tau_2 =  \left(\begin{array}{cccc}  1 & 0 & 0 & 0 \\ 0 & -1 & 0 & 0 \\ 0 & 0 & 1 & 0 \\ 0 & 0 & 0 & -1 \end{array}\right) , \;\; \tau_3 =  \left(\begin{array}{cccc}  0 & 0 & s^2 & 0 \\ 0 & s & 0 & 0 \\ 1 & 0 & 0 & 0 \\ 0 & 0 & 0 & -s \end{array}\right) .
$$
Indeed, there are $2(q+1)$ elements $(a,b) \in \GF(q) \times \GF(q)$ such that $a^2-sb^2 = \pm 1$ and there are either $4$ or $2$ couples inducing the identity collineation according as $-1$ is a square or not in $\GF(q)$. This means that $\psi$ is a group of order $(q+1)/2$ if $q \equiv 1 \pmod{4}$ or $q+1$ if $q \equiv -1 \pmod{4}$. In any case $|\langle \psi, \tau_3 \rangle| = q+1$. On the other hand, $\tau_2$ does not fix $P$ and hence $Stab_G(P) = \langle \psi, \tau_3 \rangle$. Therefore $|Stab_G(P)| = q+1$ and $|P^G| = q^2-q = |{\cQ_\lambda} \setminus {\cC}|$.
\end{proof}

From the proof of the previous lemma we have the following result.

\begin{cor}\label{cono}
A line $\ell$ through $U_4$ is secant to every elliptic quadric of $\cP$ or to every hyperbolic quadric of $\cP$, according as $\ell \cap \pi$ belongs to $\cI$ or to $\cE$. 
\end{cor}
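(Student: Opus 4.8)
The plan is to reduce the whole statement to the single point $R=\ell\cap\pi$, which is well defined because $U_4\notin\pi$, and to the value at $R$ of the quadratic form $f(X_1,X_2,X_3)=X_1X_3-X_2^2$ cutting out $\cC$ on $\pi$. First I would parametrise $\ell$: apart from $U_4$ its points are $P_\mu=(r_1,r_2,r_3,\mu)$ with $\mu\in\GF(q)$, where $R=(r_1,r_2,r_3,0)$. For $\lambda\neq0$ the vertex $U_4$ does not lie on $\cQ_\lambda$, so the intersection $\ell\cap\cQ_\lambda$ is controlled entirely by $f(R)+\lambda\mu^2=0$, i.e.\ by $\mu^2=-f(R)/\lambda$. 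Hence $\ell$ is secant to $\cQ_\lambda$ precisely when $-f(R)/\lambda$ is a nonzero square of $\GF(q)$ (a zero value gives tangency, a nonsquare gives an external line), and this is the only computation needed on the line side.

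The crux is the external/internal dichotomy for $\cC$, namely that $R\in\cE$ exactly when $-f(R)$ is a nonzero square and $R\in\cI$ exactly when $-f(R)$ is a nonsquare. This is where ``from the proof of the previous lemma'' does its work. Since $G$ preserves $\cC$, it multiplies $f$ by a scalar, and evaluating the top--left block of $M$ at $U_2$ gives $f\big(M\cdot U_2\big)=-(ad-bc)^2=(ad-bc)^2 f(U_2)$, so that scalar is always a nonzero square; thus the square class of $f(R)$ is a $G$--invariant on $\pi\setminus\cC$. By the transitivity of $G$ on $\cE$ and on $\cI$ (Remark \ref{tra}) it then suffices to read the dichotomy off the two representatives already produced in the proof of Lemma \ref{point}: the external point $U_2$ with $f(U_2)=-1$ a nonzero square up to sign, and the internal point $R_0=(-s,0,1,0)$ with $f(R_0)=-s$, $s$ a nonsquare.

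Finally I would combine the two steps, using that $\cQ_\lambda$ is hyperbolic iff $\lambda$ is a nonzero square and elliptic iff $\lambda$ is a nonsquare (Subsection \ref{setting}). Then $-f(R)/\lambda$ is a nonzero square iff $-f(R)$ and $\lambda$ lie in the same square class, i.e.\ iff either $R\in\cE$ and $\cQ_\lambda$ is hyperbolic, or $R\in\cI$ and $\cQ_\lambda$ is elliptic; and this holds simultaneously for all $\lambda$ of the relevant type, which is exactly the asserted statement. Alternatively, one can bypass the square--class bookkeeping altogether: since $G$ fixes $U_4$ and stabilises each $\cQ_\lambda$ (Lemma \ref{pencil}), it carries lines through $U_4$ to lines through $U_4$ and secant lines to secant lines, so the single secant lines exhibited in the proof of Lemma \ref{point} (through $U_2$ for each hyperbolic $\cQ_\lambda$, and through $R_0$ for each elliptic $\cQ_\lambda$) spread by transitivity on $\cE$ and $\cI$ to all of $\ell$. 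I do not expect a genuine obstacle here; the only non--formal ingredient is the conic dichotomy above, everything else being the short line--quadric substitution and the group action.
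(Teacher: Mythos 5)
Your argument is correct, and in fact you give two proofs. The paper's own ``proof'' is the single sentence preceding the corollary: it reads the statement off the proof of Lemma \ref{point}, where for each hyperbolic $\cQ_\lambda$ a point of $\cQ_\lambda\setminus\cC$ is exhibited on the line $U_4U_2$ with $U_2\in\cE$, and for each elliptic $\cQ_\lambda$ a point on the line $U_4R_0$ with $R_0=(-s,0,1,0)\in\cI$; transitivity of $G$ on $\cE$ and on $\cI$, together with the facts that $G$ fixes $U_4$ and each $\cQ_\lambda$, then spreads these two lines to all lines through $U_4$. That is precisely the ``alternative'' you sketch in your last sentences, so you have the paper's route covered. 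Your primary route is different and more self-contained: parametrising $\ell$ as $(r_1,r_2,r_3,\mu)$ reduces secancy to $\cQ_\lambda$ to whether $-f(R)/\lambda$ is a nonzero square, and the only nontrivial input is the dichotomy $R\in\cE\Leftrightarrow -f(R)$ a nonzero square, which you correctly obtain from the multiplier computation $f(M\cdot U_2)=-(ad-bc)^2=(ad-bc)^2f(U_2)$ (so the square class of $f$ is a $G$-invariant on $\pi\setminus\cC$) together with the two representatives $U_2$ and $R_0$. What the computational route buys is independence from the orbit bookkeeping of Lemma \ref{point} and an explicit criterion (same square class of $-f(R)$ and $\lambda$) valid line by line; what the paper's route buys is brevity, since all the needed data is already on the table. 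One tiny point of hygiene: your claim that $G$ multiplies $f$ by a scalar because it preserves $\cC$ deserves the one-line justification that two nondegenerate ternary quadratic forms with the same zero locus are proportional for $q$ odd, after which evaluating at $U_2$ pins the scalar down to $(ad-bc)^2$; this is standard and does not affect correctness.
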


	\begin{lemma}\label{tang}
Every line of $\PG(3,q)$ not contained in $\pi$ and not containing a point of $\cC$, is tangent to exactly one quadric $\cQ_{\lambda}$ of $\cal P$.
	\end{lemma}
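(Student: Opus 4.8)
The plan is to turn the tangency condition into a single polynomial equation in $\lambda$ and then count its roots. Fix two distinct points $P=(p_1,p_2,p_3,p_4)$ and $R=(r_1,r_2,r_3,r_4)$ spanning $\ell$, and write a general point of $\ell$ as $sP+tR$. Let $Q_\lambda$ denote the quadratic form $X_1X_3-X_2^2+\lambda X_4^2$ and $B_\lambda$ its associated symmetric bilinear form. The restriction of $Q_\lambda$ to $\ell$ is the binary quadratic form $\phi_\lambda(s,t)=Q_\lambda(P)s^2+B_\lambda(P,R)st+Q_\lambda(R)t^2$, and $\ell$ is tangent to $\cQ_\lambda$ exactly when $\phi_\lambda$ has a repeated root while not vanishing identically, that is, when its discriminant $D(\lambda)=B_\lambda(P,R)^2-4Q_\lambda(P)Q_\lambda(R)$ equals $0$ but $\phi_\lambda\not\equiv 0$. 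This criterion is uniform across $\cP$, so I do not need to treat the elliptic, hyperbolic and cone members separately.

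First I would compute $D(\lambda)$ explicitly. Writing $Q_0$ and $B_0$ for the $\lambda$-independent parts (delete the $X_4^2$ term), one has $Q_\lambda(P)=Q_0(P)+\lambda p_4^2$, $Q_\lambda(R)=Q_0(R)+\lambda r_4^2$ and $B_\lambda(P,R)=B_0(P,R)+2\lambda p_4r_4$. Substituting these into $D(\lambda)$, the two terms of order $\lambda^2$ cancel, so $D$ is an affine-linear function of $\lambda$, say $D(\lambda)=A+B\lambda$ with $A=B_0(P,R)^2-4Q_0(P)Q_0(R)$ and leading coefficient $B=4\bigl(p_4r_4B_0(P,R)-r_4^2Q_0(P)-p_4^2Q_0(R)\bigr)$. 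The key observation is the geometric meaning of $B$: since $\ell\not\subset\pi$, the line meets $\pi$ in the single point $P_0=r_4P-p_4R$, and a direct expansion shows that the displayed bracket is precisely $-\tfrac14$ times $Q_0(P_0)$, so $B=-4\,Q_0(P_0)$.

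Now the hypotheses enter. Within $\pi$ the base conic is $\cC=\pi\cap\{Q_0=0\}$, so $P_0\in\cC$ would force $\ell$ to contain a point of $\cC$, contrary to assumption; hence $Q_0(P_0)\ne 0$ and $B\ne 0$. Therefore $D(\lambda)=A+B\lambda$ has the unique root $\lambda^{\ast}=-A/B\in\GF(q)$, giving exactly one value of $\lambda$ for which $\ell$ meets $\cQ_\lambda$ with multiplicity two.

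It remains to exclude the degenerate possibility that $\phi_{\lambda^{\ast}}$ vanishes identically, i.e. that $\ell$ lies on $\cQ_{\lambda^{\ast}}$; this is the one point needing care, since a contained line would not count as tangent. But $P_0\in\ell$ lies in $\pi$, so $Q_{\lambda^{\ast}}(P_0)=Q_0(P_0)\ne 0$, whence $P_0\notin\cQ_{\lambda^{\ast}}$ and $\ell\not\subset\cQ_{\lambda^{\ast}}$. Consequently $D(\lambda^{\ast})=0$ genuinely expresses tangency, and $\ell$ is tangent to $\cQ_{\lambda^{\ast}}$ and to no other member of $\cP$. The only delicate steps are the cancellation of the quadratic term in $D$, which makes the count exact, and the identification of its leading coefficient with $Q_0$ evaluated at $\ell\cap\pi$, which is what ties the two hypotheses directly to the nonvanishing of $B$.
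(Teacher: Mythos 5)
Your proof is correct, but it takes a genuinely different route from the paper's. The paper argues geometrically and by cases: for a line meeting the cone $\cQ_0$ in exactly one point it either invokes Corollary \ref{cono} (lines through $U_4$) or works inside the plane spanned by a generator of $\cQ_0$ and the corresponding tangent of $\cC$; for a line tangent to some $\cQ_{\bar\lambda}$, $\bar\lambda\neq 0$, at a point $P$ it examines the pencil of conics cut out on $P^{\perp_{\bar\lambda}}$ and uses the position of $P$ relative to its polar line to show $\ell$ is external or secant to every other quadric. In that treatment existence of a tangent member is only implicit (it follows, e.g., from the parity of $\sum_{\lambda}|\ell\cap\cQ_\lambda|=q$, $q$ odd), whereas your discriminant computation delivers existence and uniqueness in one stroke and handles the cone, elliptic and hyperbolic members uniformly: the key identity $B=-4\,Q_0(P_0)$ with $P_0=r_4P-p_4R=\ell\cap\pi$ ties the two hypotheses ($\ell\not\subset\pi$ and $\ell\cap\cC=\emptyset$) directly to the non-vanishing of the leading coefficient of $D(\lambda)$, and your remark that $Q_{\lambda}(P_0)=Q_0(P_0)\neq 0$ correctly excludes $\ell\subset\cQ_{\lambda^{\ast}}$, so the repeated-root condition really is tangency. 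What the paper's longer argument buys is finer geometric information (which of the remaining quadrics are met in $0$ versus $2$ points), of the kind recorded and reused in Corollaries \ref{cono} and \ref{secant}; your computation proves exactly the stated lemma and no more. One small internal inconsistency: with your polarization convention the bracket $p_4r_4B_0(P,R)-r_4^2Q_0(P)-p_4^2Q_0(R)$ equals $-Q_0(P_0)$, not $-\tfrac14 Q_0(P_0)$; the value $B=-4Q_0(P_0)$ you then state is the correct one, and since only $B\neq 0$ is used nothing downstream is affected.
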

	\begin{proof}
Let $\ell$ be a line of $\PG(3,q)$ not contained in $\pi$ and not containing a point of $\cC$. If $|\ell \cap \cQ_0| = 1$, then either $\ell$ contains the vertex $U_4$ and the result follows from Corollary \ref{cono}, or there exists a point $T \in \cC$ such that $\ell$ is contained in the plane $\sigma$ spanned by $U_4 T$ and the line of $\pi$ tangent to $\cC$ at $T$. If the latter case occurs, then either $\sigma \cap \cQ_\lambda = \{T\}$ and $\lambda$ is a non--square, or $\sigma \cap \cQ_\lambda$ consists of two lines through $T$ and $\lambda$ is a non--zero square. In any case, if $\lambda \ne 0$, then $|\ell \cap \cQ_\lambda| \in \{0,2\}$. 

Assume that $\ell \cap \cQ_{\bar \lambda} = \{P\}$, for some fixed non--zero element ${\bar \lambda} \in \GF(q)$. Then $\ell \subset P^{\perp_{\bar \lambda}}$. 

If ${\bar \lambda}$ is a non--square, then we may assume $P = (-s,0,1,\sqrt{s/{\bar \lambda}})$, where $s$ is a fixed non--square in $\GF(q)$. In this case the plane $P^{\perp_{\bar \lambda}}$ meets $\pi$ in a line $r$ external to $\cC$ and it meets $\cQ_{\lambda}$, $\lambda \ne {\bar \lambda}$, in a non--degenerate conic ${\cal C}_{\lambda}$, $\lambda \in \GF(q) \setminus \{{\bar \lambda}\}$. Then $P$, $r$, ${\cal C}_{\lambda}$, $\lambda \in \GF(q) \setminus \{{\bar \lambda}\}$, form a pencil of quadrics of $P^{\perp_{\bar \lambda}}$. From \cite[Table 7.7]{H1}, $r$ is the polar line of $P$ with respect to ${\cal C}_{\lambda}$. Hence, $P$ is an internal point with respect to ${\cal C}_{\lambda}$ and the result follows.

 If ${\bar \lambda}$ is a square, then we may assume $P = (0,\sqrt{{\bar \lambda}},0, 1)$. In this case the plane $P^{\perp_{\bar \lambda}}$ meets $\pi$ in a line $r'$ secant to $\cC$, with $\ell \cap \cC = \{R_1, R_2\}$, and it meets $\cQ_{\lambda}$, $\lambda \ne {\bar \lambda}$, in a non--degenerate conic ${\cal C}'_{\lambda}$, $\lambda \in \GF(q) \setminus \{{\bar \lambda}\}$. On the other hand, $\cQ_{\bar \lambda} \cap P^{\perp_{\bar \lambda}}$ is a degenerate quadric $\cD$ consisting of the two lines $P R_1$ and $P R_2$. Then $\cD$, $r'$, ${\cal C}'_{\lambda}$, $\lambda \in \GF(q) \setminus \{{\bar \lambda}\}$, form a pencil of quadrics of $P^{\perp_{\bar \lambda}}$. In particular, $r'$ is the polar line of $P$ with respect to ${\cal C}'_{\lambda}$ and the lines $P R_1$, $P R_2$ are tangent to $\cC'_{\lambda}$ at $R_1$ and $R_2$, respectively, for every $\lambda \in \GF(q) \setminus \{{\bar \lambda}\}$. Hence, every line of $P^{\perp_{\bar \lambda}}$ through $P$ and not containing $R_1$ and $R_2$ is either external or secant to $\cC'_{\lambda}$.
	\end{proof}

\begin{lemma}\label{lines}
The group $G$ has $3q+5$ orbits on lines of $\PG(3,q)$:
\begin{enumerate}
\item[1)] The lines of $\pi$ are partitioned into three $G$--orbits:
\begin{itemize}
\item $\cL_1$ consisting of the $q+1$ lines that are tangent to $\cC$,
\item $\cL_2$ consisting of the $q(q-1)/2$ lines that are external to $\cC$,
\item $\cL_3$ consisting of the $q(q+1)/2$ lines that are secant to $\cC$,
\end{itemize}
\item[2)] the lines through $U_4$ are partitioned into three $G$--orbits:
\begin{itemize}
\item $\cL_1'$ consisting of the $q+1$ lines of the cone $\cQ_0$,
\item $\cL_2'$ consisting of the $q(q-1)/2$ lines meeting $\pi$ in a point of $\cI$,
\item $\cL_3'$ consisting of the $q(q+1)/2$ lines meeting $\pi$ in a point of $\cE$,
\end{itemize}
\item[3)] $q-1$ orbits of size $q+1$, each of them is a regulus of $\cQ_\lambda$, $\lambda$ a non--zero square in $\GF(q)$,
\item[4)] $q-1$ orbits of size $(q^3-q)/2$ consisting of lines tangent to $\cQ_\lambda$, $\lambda \ne 0$, and meeting $\pi$ in exactly one point of $\cI$,
\item[5)] $q-1$ orbits of size $(q^3-q)/2$ consisting of lines tangent to $\cQ_\lambda$, $\lambda \ne 0$, and meeting $\pi$ in exactly one point of $\cE$,
\item[6)] $\cL_4$ consisting of the $q^3-q$ lines tangent to $\cQ_0$ not through $U_4$ and meeting $\pi$ in exactly one point of $\cE$,
\item[7)] $\cL_4'$ consisting of the $q^3-q$ lines that are secant to every quadric $\cQ_\lambda$ and meeting $\pi$ in exactly one point of $\cC$.
\end{enumerate}
\end{lemma}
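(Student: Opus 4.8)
The plan is to exploit the fact that $G$ stabilises the plane $\pi$ and fixes the point $U_4$, so that the three families of lines — those lying in $\pi$, those through $U_4$ but not in $\pi$, and the rest — are each $G$-invariant and may be analysed separately. For the lines of $\pi$ the orbits $\cL_1,\cL_2,\cL_3$ are the tangent, external and secant lines to $\cC$, and transitivity on each is exactly Remark \ref{tra}. For the lines through $U_4$ the map $\ell\mapsto\ell\cap\pi$ is a $G$-equivariant bijection onto the points of $\pi$, so by Lemma \ref{point} there are precisely three orbits, according as $\ell\cap\pi$ lies on $\cC$, in $\cI$, or in $\cE$; those meeting $\cC$ are the generators of the cone $\cQ_0$, yielding $\cL_1',\cL_2',\cL_3'$ with the stated sizes.

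Every remaining line $\ell$ meets $\pi$ in a single point $Y\in\cC\cup\cI\cup\cE$, and I would split according to whether $Y\in\cC$. Since each point of $\cC$ has vanishing last coordinate while $\lambda$ appears only in the $(4,4)$ entry of the matrix of $\cQ_\lambda$, the polar plane of $Y$ is the same for every $\lambda$; call it the common tangent plane $\tau_Y$. If $Y\in\cC$, then either $\ell\subseteq\tau_Y$, in which case $\ell$ is a generator of the unique hyperbolic member of $\cP$ containing it and tangent to all the others (an orbit of type 3)); or $\ell\not\subseteq\tau_Y$, in which case $\ell$ is secant to every $\cQ_\lambda$ (the family $\cL_4'$ of part 7)). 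If $Y\notin\cC$, then $\ell$ contains no point of $\cC$, so by Lemma \ref{tang} it is tangent to exactly one $\cQ_\lambda$, and the pair $(\lambda,\ Y\in\cI \text{ or } \cE)$ separates the candidate orbits of parts 4), 5), 6). Here one invokes the geometry in the proof of Lemma \ref{tang}: a line tangent to the cone $\cQ_0$ and avoiding both $U_4$ and $\cC$ lies in the tangent plane along a generator, which meets $\pi$ in a tangent line to $\cC$ whose points off $\cC$ are all external; hence such a line meets $\pi$ in $\cE$, explaining why $\lambda=0$ gives the single orbit $\cL_4$ rather than two.

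It then remains to prove that each candidate set is one orbit and to compute its size. For a generator $\ell$ of a hyperbolic $\cQ_\lambda$ one has $\ell\cap\cC=\{Y\}$, so $Stab_G(\ell)\le G_Y$; a short matrix check (as in Lemma \ref{point}) shows that $G_Y$, of order $q(q-1)$, fixes each of the two generators through $Y$, whence $Stab_G(\ell)=G_Y$ and the orbit has size $q+1$, giving the $q-1$ orbits of part 3). For $\ell\in\cL_4'$ again $Stab_G(\ell)\le G_Y$, and a direct computation identifies the action of $G_Y$ on the secant-to-all lines through $Y$ with the affine group $\mathrm{AGL}(1,q)$ acting simply transitively; the stabiliser is therefore trivial and the orbit has size $q^3-q$. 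For the tangent lines of parts 4), 5), 6) I would use that the tangent points of $\cQ_\lambda$ form a single $G$-orbit by Lemma \ref{point}; fixing such a point $P$, the tangent lines through $P$ correspond bijectively to the points of $m=P^{\perp_\lambda}\cap\pi$ off $\cC$, their $\cI/\cE$-type matching that of the corresponding point of $m$, and it suffices to show that $G_P$ has exactly two orbits there, namely $\cI\cap m$ and $\cE\cap m$. Granting this, the stabiliser of such a line has order $2$, and combined with transitivity on the $P$ each candidate is a single orbit of size $(q^3-q)/2$ for $\lambda\ne0$ and $q^3-q$ for $\lambda=0$.

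Finally, the orbit count is $3+3+(q-1)+(q-1)+(q-1)+1+1=3q+5$, and a short arithmetic check shows the orbit sizes sum to $(q^2+1)(q^2+q+1)$, the total number of lines of $\PG(3,q)$, so the list is exhaustive and no two candidates coincide. \textbf{The main obstacle} is the transitivity of $G_P$ on each of $\cI\cap m$ and $\cE\cap m$. For a hyperbolic $\cQ_\lambda$ this is transparent: $G_P$ is the diagonal torus of order $q-1$, $m$ is a secant line, $G_P$ fixes the two points $m\cap\cC$ and multiplies the ratio of coordinates on the rest by $d^2$, so its two orbits are the square and non-square classes, which are precisely $\cE\cap m$ and $\cI\cap m$. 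For an elliptic $\cQ_\lambda$ the group $G_P=\langle\psi,\tau_3\rangle$ has order $q+1$ and $m$ is an external line containing no point of $\cC$; here the cleanest route is to identify $m\cong\PG(1,q)$ with the norm-one subgroup of $\GF(q^2)^*$, realise the action of $G_P$ as multiplication together with the involution $\tau_3$, and verify that the two resulting orbits of size $(q+1)/2$ are exactly the internal and external points. This last matching is the delicate computation on which the classification of parts 4) and 5) ultimately rests.
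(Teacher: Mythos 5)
Your proposal is correct and, for parts 1)--6), follows essentially the same route as the paper: the same trichotomy (lines of $\pi$, lines through $U_4$, and the remaining lines sorted via Lemma \ref{tang} and the position of $\ell\cap\pi$), followed by orbit--stabilizer computations. The only organisational difference there is that the paper computes the stabilizer of a representative line directly (for a tangent line $\ell$ to $\cQ_\lambda$ it locates $Stab_G(\ell)$ inside the stabilizer of the pair $(r,R)$, $r=P^{\perp_\lambda}\cap\pi$, $R=\ell\cap\pi$, and finds order $2$), whereas you compute the orbits of the point stabilizer $G_P$ on the pencil of tangent lines through $P$; these are equivalent bookkeepings, and the ``delicate computation'' you single out in the elliptic case is exactly what the paper verifies (the stabilizer in $G_P$ of a point of the external line $m$ has order exactly $2$, so the two $G_P$--invariant halves $\cI\cap m$ and $\cE\cap m$ of size $(q+1)/2$ are each a single orbit). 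The one genuinely different step is part 7): the paper obtains $\cL_4'$ for free as the polar image $\{r^{\perp_\lambda} : r\in\cL_4\}$ of the already--settled orbit $\cL_4$, while you argue directly that $Stab_G(U_1)=\langle\eta,\phi\rangle$ acts regularly on the $q^2-q$ secant--to--all lines through $U_1$; this checks out (parametrising those lines by their trace $(0,u,w,1)$ on $X_1=0$ with $w\ne 0$, the action is $(u,w)\mapsto(u+xw,dw)$, which is sharply transitive), so both routes work --- the polarity argument is shorter, yours is self--contained and comes bundled with your pleasant closing count that the orbit sizes sum to $(q^2+1)(q^2+q+1)$, which the paper leaves implicit. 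Two details you gloss over that deserve a line: the candidate classes must be pairwise disjoint for your counting argument to prove exhaustiveness (this follows from Lemma \ref{tang}), and in part 3) one should still justify that each size--$(q+1)$ orbit is actually a regulus rather than a mixed set of generators, which the paper deduces from the transitivity of $G$ on $\cC$.
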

\begin{proof}
It is straightforward to prove {\em 1)} and {\em 2)}, see also Remark \eqref{tra}. In order to prove {\em 3)}, let $\lambda$ be a non--zero square in $\GF(q)$ and let $\ell_1$ be the line joining $P=(0, \sqrt{\lambda}, 0, 1)$ and $U_1$. Then $\ell_1$ is a line of $\cQ_{\lambda}$. Note that $Stab_G(\ell_1) \le Stab_G(U_1)$ and $Stab_G(U_1)$ is the group of order $q(q-1)$ generated by $\eta$ and $\phi$. On the other hand, $Stab_G(U_1)$ fixes $\ell_1$, hence $|\ell_1^G| = q+1$. Also, since $G$ is transitive on points of $\cC$, we have that through every point of $\cC$ there pass a line of $\ell_1^G$ and, taking into account Lemma \ref{pencil}, we may conclude that $\ell_1^G$ is a regulus of $\cQ_{\lambda}$.

If $\ell_2$ denotes a line tangent to the hyperbolic quadric $\cQ_{\lambda}$ at $P = (0, \sqrt{\lambda}, 0, 1)$, then $P^{\perp_{\lambda}}$ meets $\pi$ in a line, say $r$, that is secant to $\cC$ and $\ell_2 \cap \pi$ is a point $R$ belonging either to $\cI \cap r$ or to $\cE \cap r$. Note that $Stab_G(\ell_2)$ has to fix both the line $r$ and the point $R$; also $Stab_G(r) = \langle \phi, \tau_1 \rangle$. Let $R = (1,0,\alpha,0)$, for some $\alpha \in \GF(q) \setminus \{0\}$. The stabilizer of $R$ in $Stab_G(r)$ is a group $H$ of order $4$ generated by 
$$
\left(\begin{array}{cccc}  1 & 0 & 0 & 0 \\ 0 & -1 & 0 & 0 \\ 0 & 0 & 1 & 0 \\ 0 & 0 & 0 & -1 \end{array}\right) ,  \left(\begin{array}{cccc}  0 & 0 & 1 & 0 \\ 0 & \alpha & 0 & 0 \\ \alpha^2 & 0 & 0 & 0 \\ 0 & 0 & 0 & -\alpha \end{array}\right) .
$$
On the other hand, the unique non--trivial element of $H$ fixing $P$ is an involution. It follows that $|Stab_G(\ell_2)| = 2$. Similarly, if $\lambda$ is a non--square of $\GF(q)$ and $\ell_3$ is a line tangent to the elliptic quadric $\cQ_{\lambda}$ at the point $P' = (-s,0,1,\sqrt{s/{\bar \lambda}})$, for a fixed non--square $s$ in $\GF(q)$, then $P'^{\perp_{\lambda}}$ meets $\pi$ in a line, say $r'$, that is external to $\cC$ and $\ell_3 \cap \pi$ is a point $R'$ belonging either to $\cI \cap r'$ or to $\cE \cap r'$. Again, $Stab_G(\ell_3)$ has to fix both the line $r'$ and the point $R'$; in this case $Stab_G(r') = \langle \psi, \tau_2, \tau_3 \rangle$. The stabilizer of $R'$ in $Stab_G(r')$ is a group $H'$ of order $4$ generated by 
$$
\left(\begin{array}{cccc}  0 & 0 & s^2 & 0 \\ 0 & s & 0 & 0 \\ 1 & 0 & 0 & 0 \\ 0 & 0 & 0 & -s \end{array}\right),  \left(\begin{array}{cccc}  1 & -2\alpha s & s^2\alpha^2 & 0 \\ \alpha & -s\alpha^2-1 & s\alpha  & 0 \\ \alpha^2 & -2\alpha & 1 & 0 \\ 0 & 0 & 0 & s\alpha^2-1 \end{array}\right) ,
$$ 
if $R' = (s \alpha,1,\alpha,0)$, for some $\alpha\in\GF(q)$, or is the group generated by 
$$
\left(\begin{array}{cccc}  0 & 0 & s^2 & 0 \\ 0 & s & 0 & 0 \\ 1 & 0 & 0 & 0 \\ 0 & 0 & 0 & -s \end{array}\right),  \left(\begin{array}{cccc}  1 & 0& 0 & 0 \\ 0 & -1 & 0  & 0 \\ 0& 0&1 & 0 \\ 0 & 0 & 0 & -1\end{array}\right) .
$$
if $R'=(s,0,1,0)$. The unique non--trivial element of $H$ fixing $P$ is an involution. Hence $|Stab_G(\ell_3)|=2$ and  {\em 4)}, {\em 5)} follow.

In order to prove {\em 6)}, let $\ell_4$ be a line tangent to $\cQ_0$ at a point, say $T$, of $\cQ_0 \setminus (\cC \cup \{U_4\})$. Then $\ell_4$ lie in a plane spanned by a line $g$ of $\cQ_0$ and a line of $\pi$ that is tangent to $\cC$, say $t$, with $t \cap g \in \cC$. Since there are $q+1$ of these planes and each such a plane contains $q^2-q$ lines that are tangent to $\cQ_0$ at a point of $\cQ_0 \setminus (\cC \cup \{U_4\})$, it follows that $\ell_4$ can be chosen in $q^3-q$ ways. We will prove that these lines are permuted in a unique orbit under the action of $G$. In order to see this fact, it is enough to show that the stabilizer of $\ell_4$ in $G$ is trivial. Let $T = (1,0,0,1)$ and let $\ell_4 = U_2 T$. Then $g = U_1 U_4$ and $t = U_1 U_2$. The stabilizer of $\ell_4$ in $G$ is contained in the stabilizer of $U_1$ in $Stab_G(U_2) = \langle \phi, \tau_1 \rangle$, that is $Stab_G(\ell_4) \le \phi$. Hence $Stab_G(\ell_4) = Stab_\phi(T)$, that is the identity. 

Finally to prove {\em 7)}, observe that a line of $\cL_4$ is secant to every hyperbolic quadric and external to every elliptic quadric of $\cP$. Therefore, taking into account Lemma \ref{pencil} and Lemma \ref{tang}, it is easily seen that $\cL_4' = \{ r^{\perp_{\lambda}} \;\; | \;\; r \in \cL_4 \}$, for a fixed $\lambda \in \GF(q) \setminus \{0\}$.
\end{proof}
From the proof of Lemma \ref{lines}, we have the following result.
\begin{cor}\label{secant}
A line of $\cL_4$ is secant to every hyperbolic quadric and external to every elliptic quadric of $\cP$. A line of $\cL_4'$ is secant to every quadric of $\cP$. 
\end{cor}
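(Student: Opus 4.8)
The plan is to read both assertions as statements about the single $G$-orbits $\cL_4$ and $\cL_4'$, and to exploit the fact (Lemma \ref{pencil}) that $G$ stabilizes \emph{every} quadric $\cQ_\lambda$ of the pencil at once. A collineation stabilizing $\cQ_\lambda$ sends secant lines to secant lines and external lines to external lines of $\cQ_\lambda$, so the intersection type of a line with a fixed $\cQ_\lambda$ is constant along each $G$-orbit. Since $\cL_4$ and $\cL_4'$ are each a single $G$-orbit (parts 6) and 7) of Lemma \ref{lines}), it therefore suffices to verify each claim on one orbit representative, and for these I would reuse the representatives already produced in the proof of Lemma \ref{lines}.

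For the first assertion I would take $\ell_4 = U_2 T$ with $T = (1,0,0,1)$, whose general point is $(\nu,\mu,0,\nu)$. Substituting into the quadratic form of $\cQ_\lambda$ yields $\lambda\nu^2 - \mu^2$, so $\ell_4 \cap \cQ_\lambda$ is governed by $\mu^2 = \lambda\nu^2$. For $\lambda$ a non-zero square this has the two distinct solutions $\mu/\nu = \pm\sqrt{\lambda}$, so $\ell_4$ is secant to every hyperbolic $\cQ_\lambda$, while for $\lambda$ a non-square there is no solution and $\ell_4$ is external to every elliptic $\cQ_\lambda$. The value $\nu=0$ recovers $\ell_4\cap\pi = U_2 \in \cE$, which lies on no $\cQ_\lambda$ with $\lambda\neq0$, consistent with membership in $\cL_4$. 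Transitivity of $G$ on $\cL_4$ then gives the first assertion for every line of $\cL_4$.

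For the second assertion I would use the identification $\cL_4' = \{\, r^{\perp_\lambda} : r \in \cL_4 \,\}$ for a fixed $\lambda\neq0$, established in the proof of part 7); this set is a single $G$-orbit because $G$ commutes with each polarity $\perp_\lambda$ (again by Lemma \ref{pencil}), and for the same reason it is independent of the chosen $\lambda$. Computing the polar of the above $\ell_4$ gives a representative of $\cL_4'$ with general point $(X_1,0,-2\lambda X_4,X_4)$; substituting into $\cQ_\mu$ makes the quadratic factor as $X_4\,(\mu X_4 - 2\lambda X_1)$. This produces one intersection at $X_4=0$ (a point of $\cC$, consistent with $\cL_4'\cap\pi\subset\cC$) and a second, always distinct, intersection for every $\mu\in\GF(q)$, the value $\mu=0$ included. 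Hence the representative is secant to every quadric of $\cP$, the cone $\cQ_0$ included, and transitivity of $G$ on $\cL_4'$ finishes.

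The computations are entirely routine; the only points needing care are checking that in the $\cL_4'$ case the second intersection is genuinely distinct from the $\cC$-point uniformly in $\mu$, so that ``secant'' (and not ``tangent'') holds for every member of the pencil, and justifying that $r\mapsto r^{\perp_\lambda}$ carries $\cL_4$ bijectively onto a single $G$-orbit independent of $\lambda$ — both consequences of $G$ commuting with every $\perp_\lambda$. The one conceptual observation behind the first assertion is that, by Lemma \ref{tang}, a line of $\cL_4$ is tangent to exactly one member of $\cP$, namely $\cQ_0$, hence meets every other $\cQ_\lambda$ in $0$ or $2$ points; thus only the square class of $\lambda$ can toggle the line between external and secant, which is precisely what the one-line computation $\mu^2=\lambda\nu^2$ detects.
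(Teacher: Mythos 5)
Your proof is correct, and its backbone --- $\cL_4$ and $\cL_4'$ are single $G$-orbits and $G$ preserves each $\cQ_\lambda$ individually, so it suffices to test one representative --- is exactly the mechanism the paper relies on when it deduces the corollary ``from the proof of Lemma \ref{lines}''. Where you differ is in how the representative is tested. The paper's implicit justification of the first assertion is synthetic: a line of $\cL_4$ lies in a plane $\sigma$ spanned by a generator of the cone $\cQ_0$ and a tangent line of $\cC$, and by the analysis in the proof of Lemma \ref{tang} such a plane meets an elliptic $\cQ_\lambda$ only in the point $\sigma\cap\cC$ and meets a hyperbolic $\cQ_\lambda$ in two lines through that point; since a line of $\cL_4$ avoids $\sigma\cap\cC$, it is external in the first case and secant in the second. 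The second assertion is, in the paper, essentially the defining property of $\cL_4'$ in Lemma \ref{lines} {\em 7)}. You replace both by direct substitution into the quadratic form on the explicit representatives $U_2T$ and its polar line, which is equally valid and more self-contained; the computations $(\nu,\mu,0,\nu)\mapsto\lambda\nu^2-\mu^2$ and $(X_1,0,-2\lambda X_4,X_4)\mapsto X_4(\mu X_4-2\lambda X_1)$ check out, including the distinctness of the second intersection point from $U_1$ for all $\mu$, and the case $\mu=0$. One small caveat: your claim that $\{r^{\perp_\lambda}: r\in\cL_4\}$ is independent of $\lambda$ ``because $G$ commutes with every $\perp_\lambda$'' is not justified as stated --- the individual polar lines $\ell^{\perp_\lambda}$ do vary with $\lambda$, and equality of the sets follows only from the orbit classification of Lemma \ref{lines} --- but this is immaterial here, since a single fixed $\lambda$ already produces a representative of $\cL_4'$ on which to run the orbit argument.
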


\begin{defin} 
A point $R$ of $\PG(3,q)$, with $R \not\in \pi \cup \{U_4\}$, is said to be {\em of type $\cI$} (resp. {\em of type $\cE$}, {\em of type $\cC$}), if the line joining $U_4$ with $R$ meets $\pi$ in a point of $\cI$ (resp. $\cE$, $\cC$).   
\end{defin}

\begin{lemma}\label{lemma}
\begin{itemize}
\item[i)] Through the point $U_4$ there pass: $q+1$ lines of $\cL_1'$, $q(q-1)/2$ lines of $\cL_2'$, $q(q+1)/2$ lines of $\cL_3'$ and no line of $\cL_1$, $\cL_2$, $\cL_3$, $\cL_4$, $\cL_4'$;
\item[ii)] through a point of $\cC$ there pass: one line of $\cL_1$, $q$ lines of $\cL_3$, one line of $\cL_1'$, $q(q-1)$ lines of $\cL_4'$ and no line of $\cL_2$, $\cL_4$, $\cL_2'$, $\cL_3'$;
\item[iii)] through a point of $\cI$ there pass: one line of $\cL_2'$, $(q+1)/2$ lines of $\cL_2$, $(q+1)/2$ lines of $\cL_3$ and no line of $\cL_1$, $\cL_4$, $\cL_1'$, $\cL_3'$, $\cL_4'$;
\item[iv)] through a point of $\cE$ there pass: $2$ lines of $\cL_1$, $(q-1)/2$ lines of $\cL_2$, $(q-1)/2$ lines of $\cL_3$, $2(q-1)$ lines of $\cL_4$, one line of $\cL_3'$ and no line of $\cL_1'$, $\cL_2'$, $\cL_4'$;
\item[v)] through a point of type $\cC$ there pass: one line of $\cL_1'$, $q$ lines of $\cL_4'$, $q$ lines of $\cL_4$ and no line of $\cL_1$, $\cL_2$, $\cL_3$, $\cL_2'$, $\cL_3'$,
\item[vi)] through a point of type $\cI$ there pass: one line of $\cL_2'$, $q+1$ lines of $\cL_4'$ and no line of $\cL_1$, $\cL_2$, $\cL_3$, $\cL_4$, $\cL_1'$, $\cL_3'$,
\item[vii)] through a point of type $\cE$ there pass: one line of $\cL_3'$, $q-1$ lines of $\cL_4'$, $2(q-1)$ lines of $\cL_4$ and no line of $\cL_1$, $\cL_2$, $\cL_3$, $\cL_1'$, $\cL_2'$.
\end{itemize}
\end{lemma}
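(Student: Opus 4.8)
The plan is to compute the table one entry at a time by a single flag--counting device. Each line class in the statement is one $G$--orbit (Lemma~\ref{lines}) and each in--plane point type is one $G$--orbit, while the off--plane types are unions of $G$--orbits (Lemma~\ref{point}); so I would apply the argument to a single $G$--orbit $\cO$ at a time. For such an $\cO$ and a line class $\cL$, the number $n(\cO,\cL)$ of lines of $\cL$ through a fixed point of $\cO$ is constant on $\cO$, and the number $m(\cL,\cO)$ of points of $\cO$ on a fixed line of $\cL$ is constant on $\cL$, because $G$ preserves both sets; double counting the incident flags yields
$$|\cO|\,n(\cO,\cL)=|\cL|\,m(\cL,\cO).$$
Every nonzero entry then follows from the point--distribution of a single line of $\cL$ among the orbits together with the orbit sizes of Lemma~\ref{point}, and every zero entry from an incompatibility between the defining property of $\cL$ and the chosen point.

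First I would dispose of the classes lying in $\pi$ and the classes through $U_4$. A line of $\cL_1,\cL_2,\cL_3$ lies in $\pi$ and so contains no point off $\pi$, which forces all entries for $U_4$ and for the off--plane types to vanish; for a point of $\pi$ the three counts are the classical numbers of tangent, external and secant lines through a point lying on, inside, or outside a conic of $\PG(2,q)$, $q$ odd (namely $1,0,q$; then $0,\tfrac{q+1}{2},\tfrac{q+1}{2}$; then $2,\tfrac{q-1}{2},\tfrac{q-1}{2}$), $\cC$ being a conic by Remark~\ref{tra}. For $\cL_1',\cL_2',\cL_3'$ every member passes through $U_4$ and meets $\pi$ in one point of $\cC$, $\cI$ or $\cE$ respectively; hence through $U_4$ pass all $q+1$, $q(q-1)/2$, $q(q+1)/2$ of them, while through any other point $R$ the only candidate is the line $U_4R$, which belongs to the relevant class exactly when $U_4R\cap\pi$ lies in the matching orbit. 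Reading off whether $R$ is of type $\cC,\cI,\cE$ (off--plane) or lies in $\cC,\cI,\cE$ (in $\pi$) then fixes all the $0/1$ entries at once.

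The substantive part is $\cL_4$ and $\cL_4'$, which I would treat by the flag identity applied to each single $G$--orbit. By Corollary~\ref{secant} a line of $\cL_4$ is external to every elliptic quadric, secant to every hyperbolic quadric, and tangent to $\cQ_0$ at one smooth point off $\cC$, whereas a line of $\cL_4'$ is secant to every quadric of $\cP$; moreover each point with $x_4\neq 0$ lies on the unique $\cQ_\lambda$ with $\lambda=(x_2^2-x_1x_3)/x_4^2$, so an off--plane point's type is just the square class of $\lambda$. Hence $m(\cL,\cO)$ is immediate: for an $\cL_4$ line it is $1$ for the cone orbit (the tangency point) and for the in--plane $\cE$--orbit, $2$ for each hyperbolic orbit, and $0$ for the elliptic orbits and the remaining in--plane orbits; for an $\cL_4'$ line the in--plane point lies on $\cC$, which is on every quadric, so each nonzero quadric orbit and the cone orbit meet the line in exactly one further point, giving $m=1$. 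Dividing $|\cL|\,m$ by the orbit sizes of Lemma~\ref{point} then produces every nonzero entry, e.g. a hyperbolic orbit gives $n(\cO,\cL_4)=2(q^3-q)/(q^2+q)=2(q-1)$, and one checks this value is the same for every orbit making up a type, so it may be recorded against the type. The vanishing entries follow from the same facts: a line of $\cL_4$ never meets an elliptic quadric, hence misses every type--$\cI$ point, and meets $\pi$ only in $\cE$, hence avoids $U_4$ and all of $\cC,\cI$ in $\pi$; a line of $\cL_4'$ meets $\pi$ only in $\cC$, so misses $\cI$ and $\cE$ in $\pi$, and being secant to $\cQ_0$ it is not a generator and so avoids $U_4$.

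I expect the only real difficulty to lie in pinning down this point--distribution along a single line: one must verify that the tangency point of an $\cL_4$ line is a smooth cone point off $\cC$ and that the second intersections with the $(q-1)/2$ hyperbolic quadrics are genuinely distinct off--plane type--$\cE$ points, so that the $q+1$ points of the line are accounted for with none lying on two quadrics. Corollary~\ref{secant} and the partition of the off--plane points by the pencil are precisely what guarantee this; once the distribution is in place, every entry of the table drops out of the flag identity and the orbit sizes without further effort.
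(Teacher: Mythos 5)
Your proposal is correct, and it reaches every entry of the table; the inputs you need --- the distribution of the point types along a single line of each class, extracted from Corollary~\ref{cono}, Corollary~\ref{secant} and the definitions in Lemma~\ref{lines} --- are exactly the preliminary observations with which the paper's proof opens. The difference is in how those inputs are converted into the per-point counts. The paper uses the flag/double-counting argument only twice (for the $q(q-1)$ lines of $\cL_4'$ through a point of $\cC$ and the $2(q-1)$ lines of $\cL_4$ through a point of $\cE$); in all remaining cases it identifies the relevant lines through $P$ explicitly (the unique line $U_4P$ for $\cL_1',\cL_2',\cL_3'$; the joins of $P$ with the points of $\cC$, or of $\cC$ minus one or two points, for $\cL_4'$; the joins of $P$ with the non-special points of the tangent lines $r_1,r_2$ to $\cC$ for $\cL_4$), which gives the numbers $q$, $q+1$, $q-1$, $2(q-1)$ by inspection. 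You instead run the orbit identity $|\cO|\,n(\cO,\cL)=|\cL|\,m(\cL,\cO)$ uniformly over each $G$-orbit, having first checked (as you rightly flag one must) that the off-plane types $\cC$, $\cI$, $\cE$ coincide with the cone orbit, the union of the elliptic orbits, and the union of the hyperbolic orbits respectively, and that the resulting quotient is the same on each orbit constituting a type. Your route is more systematic and avoids the case-by-case synthetic identification; the paper's route is more self-contained at each point (it exhibits the actual lines, which also makes the zero entries and the transversal structure visible) and does not need the orbit decomposition of the off-plane types. Both are complete proofs.
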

\begin{proof}
Preliminarily note that a line of $\cL_1'$ or of $\cL_4'$ meets $\pi$ in a point of $\cC$; a line of $\cL_3'$ or of $\cL_4$ meets $\pi$ in a point of $\cE$ and a line of $\cL_2'$ intersects $\pi$ in a point of $\cI$. Also, a line of $\cL_1'$ (resp. $\cL_2'$, $\cL_3'$) contains $q-1$ points of type $\cC$ (resp. of type $\cI$, of type $\cE$). Taking into account Corollary \ref{cono} and Corollary \ref{secant}, it follows that a line of $\cL_4$ contains $q-1$ points of type $\cE$ and one point of type $\cC$, whereas a line of $\cL_4'$ contains $(q-1)/2$ points of type $\cE$, $(q-1)/2$ points of type $\cI$ and one point of type $\cC$.

It is straightforward to prove $i)$. To prove $ii)$, observe that $|\cC| = q+1$, $|\cL_4'| = q^3-q$ and every line of $\cL_4'$ contains exactly one point of $\cC$, see Lemma \ref{lines}, {\em 7)}. Hence through a point of $\cC$ there pass $q^2-q$ lines of $\cL_4'$. Let $P \in \PG(3,q) \setminus (\cC \cup \{U_4\})$. 

\medskip
\fbox{$P \in \cI$}
\medskip

\noindent The unique line of $\cL_2'$ through $P$ is the line $U_4 P$. Also among the $q+1$ lines of $\pi$ through $P$ there are $(q+1)/2$ that are secant to $\cC$ and $(q+1)/2$ that are external to $\cC$. 

\medskip
\fbox{$P \in \cE$}
\medskip

\noindent Among the $q+1$ lines of $\pi$ through $P$ there are two lines of $\cL_1$, $(q-1)/2$ lines of $\cL_2$ and $(q-1)/2$ lines of $\cL_3$. The unique line of $\cL_3'$ through $P$ is the line $U_4 P$. Moreover, since a line of $\cL_4$ contains exactly one point of $\cE$, $|\cE| = (q^2+q)/2$ and $|\cL_4| = q^3-q$, we have that through the point $P$ there pass $2(q-1)$ lines of $\cL_4$. 

\medskip
\fbox{$P$ point of type $\cC$}
\medskip

\noindent The unique line of $\cL_1'$ through $P$ is the line $g = U_4 P$. Let $Z = \cC \cap g$. Then the lines of $\cL_4'$ through $P$ are those joining $P$ with a point of $\cC \setminus \{Z\}$. Similarly if $z$ denotes the line of $\pi$ that is tangent to $\cC$ at the point $Z$, then the lines of $\cL_4$ through $P$ are those joining $P$ with a point of $z \setminus \{Z\}$.

\medskip
\fbox{$P$ point of type $\cI$}
\medskip

\noindent The unique line of $\cL_2'$ through $P$ is the line $U_4 P$, whereas the $q+1$ lines of $\cL_4'$ through $P$ are those joining $P$ with a point of $\cC$.

\medskip
\fbox{$P$ point of type $\cE$}
\medskip

\noindent The unique line of $\cL_3'$ through $P$ is the line $g = U_4 P$. Let $Y = g \cap \pi \in \cE$. Through the point $Y$ there pass two lines of $\pi$, say $r_1$, $r_2$, that are tangent to $\cC$ at the point $R_1$, $R_2$, respectively. Then the lines of $\cL_4'$ through $P$ are those joining $P$ with a point of $\cC \setminus \{R_1, R_2\}$, whereas the the lines of $\cL_4$ containing $P$ are those joining $P$ with the points of $(r_1 \cup r_2) \setminus \{U_4, R_1, R_2\}$.
\end{proof}
Dually we have the following result.
\begin{lemma}\label{lemma1}
Let $\sigma$ be a plane of $\PG(3,q)$ and let $r = \sigma \cap \pi$.
\begin{enumerate}
\item[i)] If $\sigma = \pi$, then $\sigma$ contains: $q+1$ lines of $\cL_1$, $q(q-1)/2$ lines of $\cL_2$, $q(q+1)/2$ line of $\cL_3$ and no line of $\cL_1'$, $\cL_2'$, $\cL_3'$, $\cL_4'$, $\cL_4$.
\item[] If $U_4 \in \sigma$ and  
\begin{itemize}
\item[ii)] $|r \cap \cC| = 0$, then $\sigma$ contains: one line of $\cL_2$, $(q+1)/2$ lines of $\cL_2'$, $(q+1)/2$ lines of $\cL_3'$ and no line of $\cL_1$, $\cL_1'$, $\cL_3$, $\cL_4$, $\cL_4'$;
\item[iii)] $|r \cap \cC| = 1$, then $\sigma$ contains: one line of $\cL_1'$, $q$ lines of $\cL_3'$, one line of $\cL_1$, $q(q-1)$ lines of $\cL_4$ and no line of $\cL_2$, $\cL_3$, $\cL_2'$, $\cL_4'$;
\item[iv)] $|r \cap \cC| = 2$, then $\sigma$ contains: $2$ lines of $\cL_1'$, $(q-1)/2$ lines of $\cL_2'$, $(q-1)/2$ lines of $\cL_3'$, $2(q-1)$ lines of $\cL_4'$, one line of $\cL_3$ and no line of $\cL_1$, $\cL_2$, $\cL_4$.
\end{itemize}
\item[] If $U_4 \notin \sigma$ and 
\begin{itemize}
\item[v)] $|r \cap \cC| = 0$, then $\sigma$ contains: one line of $\cL_2$, $q+1$ lines of $\cL_4$ and no line of $\cL_1'$, $\cL_2'$, $\cL_3'$, $\cL_4'$, $\cL_1$, $\cL_3$, 
\item[vi)] $|r \cap \cC| = 1$, then $\sigma$ contains: one line of $\cL_1$, $q$ lines of $\cL_4$, $q$ lines of $\cL_4'$ and no line of $\cL_1'$, $\cL_2'$, $\cL_3'$, $\cL_2$, $\cL_3$,
\item[vii)] $|r \cap \cC| = 2$, then $\sigma$ contains: one line of $\cL_3$, $q-1$ lines of $\cL_4$, $2(q-1)$ lines of $\cL_4'$ and no line of $\cL_1'$, $\cL_2'$, $\cL_3'$, $\cL_1$, $\cL_2$.
\end{itemize}
\end{enumerate}
\end{lemma}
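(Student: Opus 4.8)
The plan is to derive Lemma \ref{lemma1} from Lemma \ref{lemma} by applying a single polarity. Fix any $\lambda \in \GF(q)\setminus\{0\}$ and write $\perp := \perp_\lambda$ for the orthogonal polarity associated with $\cQ_\lambda$. Since $\perp$ is an involutory incidence-reversing bijection of $\PG(3,q)$, it sends points to planes and planes to points, and it maps the lines through a point $P$ bijectively onto the lines contained in the plane $P^\perp$. Hence if I can (a) match each plane type of Lemma \ref{lemma1} with a point type of Lemma \ref{lemma} under $P \mapsto P^\perp$, and (b) track how $\perp$ permutes the eight line orbits $\cL_1,\cL_2,\cL_3,\cL_1',\cL_2',\cL_3',\cL_4,\cL_4'$, then each incidence count of Lemma \ref{lemma} transports verbatim to the corresponding count of Lemma \ref{lemma1}.

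First I would record the action of $\perp$ on the distinguished objects. By hypothesis $U_4^\perp = \pi$, hence $\pi^\perp = U_4$; and since $\cQ_\lambda \cap \pi = \cC$, the restriction of $\perp$ to $\pi$ is exactly the polarity of the conic $\cC$. The classical conic polarity interchanges tangent lines with their points of contact, external (non-secant) lines with internal points, and secant lines with external points. Combined with $U_4^\perp = \pi$ this yields the orbit correspondences $\cL_1 \leftrightarrow \cL_1'$, $\cL_2 \leftrightarrow \cL_2'$, $\cL_3 \leftrightarrow \cL_3'$: for instance a tangent $t \in \cL_1$ has $t^\perp$ a line through $U_4 = \pi^\perp$ meeting $\pi$ in the pole of $t$, a point of $\cC$, so $t^\perp \in \cL_1'$; an external line of $\cL_2$ has pole in $\cI$, giving a line of $\cL_2'$, and a secant of $\cL_3$ has pole in $\cE$, giving a line of $\cL_3'$. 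The remaining swap $\cL_4 \leftrightarrow \cL_4'$ is precisely the identity $\cL_4' = \{r^\perp : r \in \cL_4\}$ established in the proof of Lemma \ref{lines}, part 7) (see also Corollary \ref{secant}). As $\perp$ is an involution, all four correspondences are genuine interchanges.

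Next I would set up the point/plane dictionary. The point $U_4$ maps to $\pi$, giving case i). A point $P \in \pi$ satisfies $U_4 \in P^\perp$, so $P^\perp$ passes through $U_4$ (cases ii)--iv)); moreover $r := P^\perp \cap \pi = U_4^\perp \cap P^\perp = (U_4P)^\perp$ is the polar of $P$ with respect to $\cC$, whence $P \in \cC,\cI,\cE$ correspond to $|r\cap\cC| = 1,0,2$ respectively. A point $P \notin \pi \cup \{U_4\}$ maps to a plane $P^\perp \not\ni U_4$ (cases v)--vii)); writing $P_0 = U_4 P \cap \pi$ for the projection of $P$ from $U_4$, one has $r = P^\perp\cap\pi = (U_4 P)^\perp = (U_4 P_0)^\perp = \pi\cap P_0^\perp$, the polar of $P_0$ with respect to $\cC$, so a point of type $\cC,\cI,\cE$ (that is, $P_0 \in \cC,\cI,\cE$) again gives $|r\cap\cC| = 1,0,2$.

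Finally I would apply the bijection orbit by orbit: the number of lines of a given orbit through $P$ equals the number of lines of its $\perp$-image orbit inside $P^\perp$, and likewise each ``no line of'' assertion transports. Reading each item of Lemma \ref{lemma} through the dictionary above together with the swaps $\cL_i \leftrightarrow \cL_i'$ then produces exactly the seven items of Lemma \ref{lemma1}; for example Lemma \ref{lemma} ii), a point of $\cC$, becomes Lemma \ref{lemma1} iii), a plane through $U_4$ with $r$ tangent. The only genuine care needed lies in the second step: verifying the three conic-polarity correspondences, and above all confirming the less transparent swap $\cL_4 \leftrightarrow \cL_4'$, which is the point where the geometry of the whole pencil (Corollary \ref{secant}) enters rather than merely the conic in $\pi$.
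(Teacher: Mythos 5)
Your proof is correct, and it takes a different route from the one written in the paper. The paper announces Lemma \ref{lemma1} with the single word ``Dually'' but then proves it by a self-contained direct count: for each type of plane $\sigma$ it identifies explicitly which lines of each orbit lie in $\sigma$ (e.g.\ the lines of $\cL_4$ in a plane with $|r\cap\cC|=2$ are exactly the tangents to $\sigma\cap\cQ_0$ at points off $R_1,R_2$), without ever invoking the polarity. You instead make the duality literal: you fix $\perp_\lambda$, verify that it swaps $\cL_i\leftrightarrow\cL_i'$ for $i=1,2,3$ via the induced conic polarity on $\pi$ (pole of a tangent is its contact point, pole of an external line is internal, pole of a secant is external) and $\cL_4\leftrightarrow\cL_4'$ via the identity $\cL_4'=\{r^{\perp_\lambda}: r\in\cL_4\}$ already recorded in the proof of Lemma \ref{lines}(7), set up the point--plane dictionary ($U_4\mapsto\pi$; $P\in\cC,\cI,\cE$ give planes through $U_4$ with $|r\cap\cC|=1,0,2$; points of type $\cC,\cI,\cE$ give planes off $U_4$ with $|r\cap\cC|=1,0,2$), and then transport every count of Lemma \ref{lemma} verbatim. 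I checked all seven items against this dictionary and they match. What your approach buys is economy and a lower risk of arithmetic slips, since no count is recomputed; its cost is that everything rests on the four orbit-interchange claims, of which only $\cL_4\leftrightarrow\cL_4'$ requires the geometry of the whole pencil --- you correctly identify this as the one nontrivial input. The paper's direct proof buys independence from those claims and yields explicit descriptions of the relevant lines in each plane, at the price of repeating the case analysis.
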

\begin{proof}
First of all observe that a line of $\cL_1'$ or of $\cL_2'$ or of $\cL_3'$ passes through $U_4$; also, a line of $\cL_4$ is contained in a plane spanned by a line of the quadratic cone $\cQ_0$ and a line of $\pi$ that is tangent to $\cC$. 

Let $\sigma$ be a plane of $\PG(3,q)$. If $\sigma = \pi$, then the proof easily follows. Let $\sigma \ne \pi$ and let $r = \sigma \cap \pi$. 

Assume first that $U_4 \in \sigma$. There are three possibilities according as $|r \cap \cC|$ is $0,1,2$. If $|r \cap \cC| = 0$, then $r \in \cL_2$. Also among the $q+1$ lines of $\sigma$ through $U_4$ there are $(q+1)/2$ lines of $\cL_2'$ and $(q+1)/2$ lines of $\cL_3'$. If $r \in \cL_1$, then $\sigma$ contains one line of $\cL_1'$, that is the line joining $U_4$ with $r \cap \cC$ and $q$ lines of $\cL_3'$. The $q^2-q$ lines of $\sigma$ distinct from $r$ and not containing $r \cap \cC$ and $U_4$ are lines of $\cL_4$. If $r \in \cL_3$, then among the $q+1$ lines of $\sigma$ through $U_4$ there are two lines of $\cL_1'$, $(q-1)/2$ lines of $\cL_2'$ and $(q-1)/2$ lines of $\cL_3'$. Moreover the lines of $\cL_4'$ contained in $\sigma$ are those passing through one of the two points $r \cap \cC$, distinct from $r$ and not containing $U_4$.

Assume now that $U_4 \notin \sigma$. Again three possibilities arise according as $|r \cap \cC|$ is $0,1,2$. If $|r \cap \cC| = 0$, then $r \in \cL_2$ and $\sigma \cap \cQ_0$ is a non--degenerate conic having no point in common with $\pi$. It follows that the lines of $\cL_4$ contained in $\sigma$ are the $q+1$ lines meeting $\sigma \cap \cQ_0$ in one point. If $|r \cap \cC| = 1$, then $r \in \cL_1$ and $\sigma \cap \cQ_\lambda$ is a non--degenerate conic meeting $\pi$ in $r \cap \cC$, $\lambda \in \GF(q)$. Hence the $q$ lines of $\cL_4'$ contained in $\sigma$ are those through $r \cap \cC$ and meeting $\pi$ exactly in $r \cap \cC$, whereas the $q$ lines of $\cL_4$ contained in $\sigma$ are the lines that are tangent to $\sigma \cap \cQ_0$ at a point distinct from $r \cap \pi$. Finally, if $r \cap \cC = \{R_1, R_2\}$, with $R_1 \ne R_2$, then $r \in \cL_3$ and there exists a non--zero square element of $\GF(q)$, say ${\bar \lambda}$, such that $\sigma \cap \cQ_{\bar \lambda}$ consists of two lines, say $r_1$, $r_2$, with $r_i \cap \pi = R_i$, $i=1,2$. If $\lambda \in \GF(q) \setminus \{{\bar \lambda}\}$, then $\sigma \cap \cQ_{\lambda}$ is a non--degenerate conic passing through $R_1$ and $R_2$. In this case the line $r_i \subset \sigma$ is tangent to $\sigma \cap \cQ_{\lambda}$ at the point $R_i$, $\lambda \in \GF(q) \setminus \{{\bar \lambda}\}$. It follows that the $2(q-1)$ lines of $\cL_4'$ contained in $\sigma$ are those through $R_i$, distinct from $r_i$ and meeting $\pi$ exactly in $R_i$, $i = 1,2$, whereas the $q-1$ lines of $\cL_4$ contained in $\sigma$ are the lines that are tangent to $\sigma \cap \cQ_0$ at a point distinct from $R_1, R_2$.  
\end{proof}

\subsection{A derivation technique}

Let $\cA$, $\cB$ be two line--sets of $\PG(3,q)$. For a line $\ell$ of $\PG(3,q)$, we consider the following sets: 
$$
\cA_{\ell} = \{ r \in \cA \; : \; | r \cap \ell | \ge 1\}, \; \cB_{\ell} = \{ r \in \cB \; : \; | r \cap \ell | \ge 1\}.
$$

	\begin{theorem}\label{cl}
Let $\cL$ be a Cameron--Liebler line class with parameter $(q^2+1)/2$ and let $\cA$, $\cB$ be line sets of equal size of $\PG(3,q)$ such that:
\begin{itemize}
\item[i)] $\cA \subset \cL$ and $|\cB \cap \cL| = 0$,
\item[ii)] if $\ell \notin \cA \cup \cB$, then $|\cA_{\ell}| = |\cB_{\ell}|$,
\item[iii)] if $\ell \in \cA$, then $|\cA_\ell| - |\cB_{\ell}| = q^2$,
\item[iv)] if $\ell \in \cB$, then $|\cB_\ell| - |\cA_{\ell}| = q^2$.
\end{itemize}
Then the set $\bar{\cL} = (\cL \setminus \cA) \cup \cB$ is a Cameron--Liebler line class with parameter $(q^2+1)/2$.
	\end{theorem}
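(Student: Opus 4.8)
The plan is to prove Theorem \ref{cl} via the standard ``local'' characterization of Cameron--Liebler line classes that comes from the tight set definition. Under the Klein correspondence a line $\ell$ of $\PG(3,q)$ corresponds to a point $P$ of $\cQ^+(5,q)$, two lines meet precisely when the corresponding points are perpendicular, and $\ell$ itself lies in $\ell^\perp$ (equivalently $P\in P^\perp$, as $P$ is singular). Writing $\cM_\ell=\{r\in\cM: |r\cap\ell|\ge 1\}$ for an arbitrary line set $\cM$, the definition of an $i$--tight set translates into the following criterion: a line set $\cL$ is a Cameron--Liebler line class with parameter $x$ if and only if, for every line $\ell$ of $\PG(3,q)$,
$$
|\cL_\ell| = \begin{cases} x(q+1)+q^2 & \text{if } \ell\in\cL,\\ x(q+1) & \text{if } \ell\notin\cL.\end{cases}
$$
I would take this counting criterion as the working characterization and verify it directly for $\bar{\cL}$ with $x=(q^2+1)/2$.

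First I would extract the structural consequence of hypothesis i). Since $\cA\subseteq\cL$ and $\cB\cap\cL=\emptyset$, the sets $\cA$ and $\cB$ are disjoint and the union $\bar{\cL}=(\cL\setminus\cA)\cup\cB$ is disjoint. Fixing an arbitrary line $\ell$, every line of $\cB_\ell$ lies outside $\cL$, hence outside $\cL\setminus\cA$, so $(\cL\setminus\cA)_\ell$ and $\cB_\ell$ are disjoint; moreover $\cA_\ell\subseteq\cL_\ell$ and $(\cL\setminus\cA)_\ell=\cL_\ell\setminus\cA_\ell$. This gives the basic counting identity
$$
|\bar{\cL}_\ell| = |\cL_\ell| - |\cA_\ell| + |\cB_\ell|,
$$
valid for every line $\ell$ of $\PG(3,q)$.

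Then I would carry out the case analysis, the three cases being mutually exclusive and exhaustive because $\cA$ and $\cB$ are disjoint. If $\ell\notin\cA\cup\cB$, then $\ell\in\bar{\cL}$ if and only if $\ell\in\cL$, and hypothesis ii) yields $|\bar{\cL}_\ell|=|\cL_\ell|$, which is already the prescribed value. If $\ell\in\cA$, then $\ell\in\cL$ but $\ell\notin\bar{\cL}$; here $|\cL_\ell|=x(q+1)+q^2$, and hypothesis iii) gives $|\bar{\cL}_\ell|=|\cL_\ell|-q^2=x(q+1)$, exactly the value required for a line outside $\bar{\cL}$. If $\ell\in\cB$, then $\ell\notin\cL$ but $\ell\in\bar{\cL}$; here $|\cL_\ell|=x(q+1)$, and hypothesis iv) gives $|\bar{\cL}_\ell|=|\cL_\ell|+q^2=x(q+1)+q^2$, exactly the value required for a line of $\bar{\cL}$. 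In all three cases the local count for $\bar{\cL}$ is correct, so $\bar{\cL}$ is a Cameron--Liebler line class with parameter $(q^2+1)/2$.

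I do not anticipate a serious obstacle: once the local characterization is in hand the argument is pure bookkeeping. The only points demanding care are (a) invoking the equivalence in the precise form in which $\cA_\ell$ and $\cB_\ell$ include $\ell$ itself when $\ell$ belongs to the set in question, since this is exactly what makes the $q^2$ shifts in iii) and iv) carry the count across the ``in/out'' dichotomy for $\bar{\cL}$, and (b) confirming that the three cases partition all lines, which is guaranteed by the disjointness of $\cA$ and $\cB$ forced by hypothesis i). The equal--size assumption on $\cA$ and $\cB$ is consistent with the parameter being preserved (indeed it follows a posteriori from $|\bar{\cL}|=|\cL|$), but it is not logically needed for the local count above.
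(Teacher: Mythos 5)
Your proposal is correct and follows essentially the same route as the paper: both reduce the claim to the standard count $|\cL_\ell|=x(q+1)+q^2$ or $x(q+1)$ according as $\ell\in\cL$ or not, establish $|\bar{\cL}_\ell|=|\cL_\ell|-|\cA_\ell|+|\cB_\ell|$ from hypothesis i), and then run the same case analysis on $\ell$ relative to $\cA$, $\cB$ and $\cL$ (you merely merge the paper's first two cases into one). Your added remarks --- that $\ell$ itself is counted in $\cA_\ell$ or $\cB_\ell$ when it lies there, and that the equal-size hypothesis is not needed for the local count --- are accurate but do not change the argument.
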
 
\begin{proof}
Since $\cL$ is a Cameron--Liebler line class with parameter $(q^2+1)/2$, we have that $|\{r \in \cL \; : \; |r \cap \ell| \ge 1\}|$ equals  $q^2+ (q+1)(q^2+1)/2$ if $\ell \in \cL$, or $(q + 1)(q^2+1)/2$ if $\ell \notin \cL$. 

Let $\ell$ be a line of $\PG(3,q)$. 
\begin{itemize}
\item  If $\ell \in \cL \setminus (\cA \cup \cB)$, then $\ell \in \bar{\cL}$. From $ii)$, it follows that $|\{r \in \bar{\cL} \; : \; |r \cap \ell| \ge 1\}|$ equals  $q^2+ (q+1)(q^2+1)/2$. 
\item If $\ell \notin \cL \cup \cA \cup \cB$, then $\ell \notin \bar{\cL}$. From $ii)$, it follows that $|\{r \in \bar{\cL} \; : \; |r \cap \ell| \ge 1\}|$ equals $(q + 1)(q^2+1)/2$. 
\item If $\ell \in \cA$, then $\ell \in \cL \setminus \bar{\cL}$. From $iii)$, we have that $|\{r \in \bar{\cL} \; : \; |r \cap \ell| \ge 1\}|$ equals $q^2 + (q + 1)(q^2+1)/2 - |\cA_\ell| + |\cB_{\ell}| = (q+1)(q^2+1)/2$.
\item If $\ell \in \cB$, then $\ell \in \bar{\cL} \setminus \cL$. From $iv)$, we have that $|\{r \in \bar{\cL} \; : \; |r \cap \ell| \ge 1\}|$ equals $(q + 1)(q^2+1)/2 + |\cB_\ell| - |\cA_\ell| = q^2 + (q+1)(q^2+1)/2$. 
\end{itemize}
The proof is now complete.
\end{proof}

With the same notation introduced in Section \ref{setting}, consider the following two line--sets of $\PG(3,q)$: 
$$
\cA := \cL_1' \cup \cL_2' \cup \cL_3 \cup \cL_4' , \;\; \cB := \cL_1 \cup \cL_2 \cup \cL_3' \cup \cL_4 .
$$
Set $n_i := |\{r : r \in \cL_i \;\; : \;\; |r \cap \ell| \ge 1\}|$ and $n_i' := |\{r : r \in \cL_i' \;\; : \;\; |r \cap \ell| \ge 1\}|$, $1 \le i \le 4$, where $\ell$ is a line of $\PG(3,q)$. With the notation introduced in the previous section, we have that 
$$
|\cA_\ell| = n_1' + n_2' + n_3 + n_4' , \;\; |\cB_\ell| = n_1 + n_2 + n_3' + n_4. 
$$
We will prove that if $q \equiv 1 \pmod{4}$, then there exists a Cameron--Liebler line class of Bruen--Drudge type such that the sets $\cA$, $\cB$ satisfy the hypotheses of Theorem \ref{cl}.  

\begin{prop}\label{prop1}
Let $\ell$ be a line of $\PG(3,q)$, if $\ell \not \in \cA \cup \cB$, then $|\cA_{\ell}| = |\cB_{\ell}|$.
\end{prop}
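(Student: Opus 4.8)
The plan is to use the $G$-invariance of the whole configuration to reduce to finitely many representatives. Since $\cA$ and $\cB$ are unions of the line orbits of Lemma \ref{lines} and $G$ preserves incidence, both $|\cA_\ell|$ and $|\cB_\ell|$ are constant along each $G$-orbit; hence it suffices to check the identity for one line from each orbit not contained in $\cA\cup\cB$. Comparing with $\cA\cup\cB=\cL_1\cup\cL_2\cup\cL_3\cup\cL_1'\cup\cL_2'\cup\cL_3'\cup\cL_4\cup\cL_4'$, these are precisely the orbits 3), 4), 5) of Lemma \ref{lines}: a regulus line of a hyperbolic $\cQ_\lambda$, and the tangent lines to some $\cQ_\lambda$ ($\lambda\neq 0$) meeting $\pi$ in a point of $\cI$, respectively of $\cE$. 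In every case $\ell\not\subset\pi$ and $U_4\notin\ell$.

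Next I would localise the counts. As $\ell$ lies in none of the orbits $\cL_i,\cL_i'$, each line of these orbits meets $\ell$ in at most one point, so $n_i=\sum_{P\in\ell}a_i(P)$ and $n_i'=\sum_{P\in\ell}a_i'(P)$, where $a_i(P)$ and $a_i'(P)$ count the lines of $\cL_i$, respectively $\cL_i'$, through $P$ as tabulated in Lemma \ref{lemma}. Writing $w(P):=(a_1'+a_2'+a_3+a_4')(P)-(a_1+a_2+a_3'+a_4)(P)$, the claim becomes $\sum_{P\in\ell}w(P)=0$. Substituting the seven rows of Lemma \ref{lemma} shows that $w(P)$ depends only on the type of $P$, namely $w=q^2$ for $P\in\cC$, $w=1$ for $P\in\cI$ and for $P$ of type $\cC$, $w=-(2q+1)$ for $P\in\cE$, $w=q+2$ for $P$ of type $\cI$, and $w=-q$ for $P$ of type $\cE$.

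What remains is to read off the distribution of point-types along each representative $\ell$. The regulus case is immediate: $\ell\cap\pi\in\cC$ and, by Corollary \ref{cono}, each of the other $q$ points is of type $\cE$, whence $\sum w=q^2+q\cdot(-q)=0$. For a tangent line I would classify the $q$ points off $\pi$ by the unique pencil quadric through them: a point $P\notin\pi\cup\{U_4\}$ lies on $\cQ_{\mu(P)}$ with $\mu(P)=-(X_1X_3-X_2^2)(P)/X_4(P)^2$, and is of type $\cC$, $\cE$ or $\cI$ according as $\mu(P)$ is $0$, a nonzero square, or a non-square. Parametrising $\ell$ by an affine coordinate $u$ with $B=\ell\cap\pi$ sent to infinity, tangency to $\cQ_\lambda$ forces $\mu=\lambda-v\,u^2$, where $v=(X_1X_3-X_2^2)(B)\neq 0$.

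The only genuine computation — and the main obstacle — is the square-class bookkeeping for the $q$ values $\mu=\lambda-vu^2$. Writing $\chi$ for the quadratic character of $\GF(q)$, the Gauss-sum evaluation $\sum_{u\in\GF(q)}\chi(\lambda-vu^2)=-\chi(-v)$ (valid since $\lambda v\neq 0$) together with $\#\{u:\mu=0\}=1+\chi(\lambda v)$ yields the numbers $c,e,i$ of off-$\pi$ points of types $\cC,\cE,\cI$. One also needs the sign convention: counting the tangents to $\cC$ drawn from $B$ gives $B\in\cE$ iff $\chi(-v)=1$, so a type 4) line satisfies $\chi(-1)\chi(v)=-1$ and a type 5) line satisfies $\chi(-1)\chi(v)=1$. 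Feeding $c,e,i$ and the on-$\pi$ weight of $B$ into $\sum w$, the terms carrying $\chi(-1)$ and $\chi(\lambda)\chi(v)$ cancel identically in both cases, leaving $\sum w=0$; this completes the argument for all odd $q$.
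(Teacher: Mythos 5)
Your proof is correct, and it shares the paper's basic strategy --- since $\ell$ meets each line of $\cL_i\cup\cL_i'$ in at most one point, reduce $|\cA_\ell|-|\cB_\ell|$ to a sum over the points of $\ell$ of the per-point counts in Lemma \ref{lemma} --- but it departs from the paper in how the type distribution along $\ell$ is obtained and in how the bookkeeping is organised. The paper projects $\ell$ from $U_4$ onto the line $\ell'=\pi\cap\langle U_4,\ell\rangle$ and reads off the numbers of points of type $\cC$, $\cI$, $\cE$ from the classical incidence of a tangent, secant or external line of $\pi$ with $\cC$, $\cI$, $\cE$; this forces five subcases (regulus; $\ell\cap\pi\in\cI$ or $\cE$ crossed with $\ell'$ secant or external), in each of which all eight values $n_i,n_i'$ are listed and the two sums compared. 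You instead identify the pencil quadric $\cQ_{\mu(u)}$ through each affine point of $\ell$, getting $\mu=\lambda-vu^2$, and count square classes with the quadratic character; your single weight $w(P)$ then makes the term in $c=1+\chi(\lambda v)$ cancel, so the paper's secant/external dichotomy never has to be split off, and only the sign $\chi(-v)$ (i.e.\ whether $\ell\cap\pi\in\cI$ or $\cE$) survives. The paper's route is more synthetic and yields the individual values of $|\cA_\ell|=|\cB_\ell|$ as a by-product; yours is more computational but more uniform, and it makes transparent why the answer is insensitive to whether $\ell'$ is secant or external to $\cC$. Two small points: the opening reduction to $G$-orbit representatives is harmless but unnecessary, since your argument never uses a specific representative; and the appeal to Corollary \ref{cono} in the regulus case deserves one more line (a line through $U_4$ meeting $\pi$ in $\cI$ is secant to every elliptic quadric, hence misses every hyperbolic one, so a point of a hyperbolic quadric off $\pi\cup\{U_4\}$ must be of type $\cE$).
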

\begin{proof}
Let $\ell$ be a line of $\PG(3,q)$, with $\ell \not\in \cA \cup \cB$. From Lemma \ref{lines}, we have three possibilities: either $\ell$ is contained in a regulus of a hyperbolic quadric $\cQ_{\lambda}$, for some non--zero square $\lambda \in \GF(q)$, or $\ell$ is tangent to a quadric $\cQ_{\lambda}$, $\lambda \ne 0$, and $\ell \cap \pi$ is a point of $\cI$ or $\ell$ is tangent to a quadric $\cQ_{\lambda}$, $\lambda \ne 0$, and $\ell \cap \pi$ is a point of $\cE$. Let $\ell'$ be the line obtained by intersecting $\pi$ with the plane spanned by $U_4$ and $\ell$.

If the first case occurs, then $\ell'$ is tangent to $\cC$. Hence $\ell$ contains $q$ points of type $\cE$ and one point of $\cC$. Taking into account Lemma \ref{lemma}, we get $n_1 = 1$, $n_2 = 0$, $n_3 = q$, $n_4 = 2(q^2-q)$, $n_1' = 1$, $n_2' = 0$, $n_3' = q$ and $n_4' = 2(q^2-q)$. Hence $|\cA_\ell| = |\cB_\ell| = 2q^2-q+1$.    

Assume that $\ell$ is tangent to a quadric $\cQ_\lambda$, $\lambda \ne 0$, and $\ell \cap \pi$ is a point of $\cI$ or of $\cE$. Then $\ell'$ is secant or external to $\cC$. If $\ell \cap \pi \in \cI$ and $\ell'$ is secant to $\cC$, then $\ell$ contains $(q-1)/2$ points of type $\cE$, $(q-3)/2$ points of type $\cI$, two points of type $\cC$ and one point of $\cI$. In this case we have $n_1 = 0$, $n_2 = (q+1)/2$, $n_3 = (q+1)/2$, $n_4 = q^2+1$, $n_1' = 2$, $n_2' = (q-1)/2$, $n_3' = (q-1)/2$, $n_4' = q^2-1$ and hence $|\cA_\ell| = |\cB_\ell| = q^2+q+1$.

If $\ell \cap \pi \in \cI$ and $\ell'$ is external to $\cC$, then $\ell$ contains $(q+1)/2$ points of type $\cE$, $(q-1)/2$ points of type $\cI$ and one point of $\cI$. In this case we obtain $n_1 = 0$, $n_2 = (q+1)/2$, $n_3 = (q+1)/2$, $n_4 = q^2-1$, $n_1' = 0$, $n_2' = (q+1)/2$, $n_3' = (q+1)/2$, $n_4' = q^2-1$ and hence $|\cA_\ell| = |\cB_\ell| = q^2+q$.

If $\ell \cap \pi \in \cE$ and $\ell'$ is secant to $\cC$, then $\ell$ contains $(q-1)/2$ points of type $\cI$, $(q-3)/2$ points of type $\cE$, two points of type $\cC$ and one point of $\cE$. In this case we get $n_1 = 2$, $n_2 = (q-1)/2$, $n_3 = (q-1)/2$, $n_4 = q^2+1$, $n_1' = 2$, $n_2' = (q-1)/2$, $n_3' = (q-1)/2$, $n_4' = q^2+1$ and hence $|\cA_\ell| = |\cB_\ell| = q^2+q+2$.

If $\ell \cap \pi \in \cE$ and $\ell'$ is external to $\cC$, then $\ell$ contains $(q+1)/2$ points of type $\cI$, $(q-1)/2$ points of type $\cE$ and one point of $\cE$. In this case we have $n_1 = 2$, $n_2 = (q-1)/2$, $n_3 = (q-1)/2$, $n_4 = q^2-1$, $n_1' = 0$, $n_2' = (q+1)/2$, $n_3' = (q+1)/2$, $n_4' = q^2+1$ and hence $|\cA_\ell| = |\cB_\ell| = q^2+q+1$.    
\end{proof}

\begin{prop}\label{prop2}
Let $\ell$ be a line of $\PG(3,q)$.
\begin{itemize} 
\item If $\ell \in \cA$, then $|\cA_\ell| - |\cB_{\ell}| = q^2$,
\item If $\ell \in \cB$, then $|\cB_\ell| - |\cA_{\ell}| = q^2$.
\end{itemize}
\end{prop}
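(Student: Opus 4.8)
The plan is to rewrite $|\cA_\ell|-|\cB_\ell|$ as a sum of purely local contributions, one per point of $\ell$, together with a single correction term for $\ell$ itself. For a point $P$ of $\PG(3,q)$ let $a(P)$ and $b(P)$ denote the number of lines of $\cA$, respectively $\cB$, through $P$, and put $g(P)=a(P)-b(P)$. Since $\cA=\cL_1'\cup\cL_2'\cup\cL_3\cup\cL_4'$ and $\cB=\cL_1\cup\cL_2\cup\cL_3'\cup\cL_4$ are disjoint unions of orbits, double counting incident (point, line) pairs along $\ell$ gives
\[
|\cA_\ell|=\sum_{P\in\ell}a(P)-q\,[\ell\in\cA],\qquad |\cB_\ell|=\sum_{P\in\ell}b(P)-q\,[\ell\in\cB],
\]
the correction $-q$ appearing because a line lying in the set meets itself in $q+1$ points yet contributes a single member to $\cA_\ell$ or $\cB_\ell$. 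Subtracting,
\[
|\cA_\ell|-|\cB_\ell|=\sum_{P\in\ell}g(P)-q\big([\ell\in\cA]-[\ell\in\cB]\big),
\]
so the correction is $-q$ when $\ell\in\cA$ and $+q$ when $\ell\in\cB$.

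First I would tabulate $g(P)$ for each of the seven point-types distinguished in Lemma \ref{lemma}, reading the incidence numbers directly off that lemma and taking the appropriate alternating sums. Each is a one-line computation, and the outcome is $g(U_4)=1$; $g(P)=1$ for $P\in\cI$ and for $P$ of type $\cC$; $g(P)=q+2$ for $P$ of type $\cI$; $g(P)=-q$ for $P$ of type $\cE$; $g(P)=-(2q+1)$ for $P\in\cE$; and, crucially, $g(P)=q^2$ for $P\in\cC$.

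Next, for each of the eight orbits composing $\cA\cup\cB$ I would determine the multiset of point-types lying on a representative line $\ell$ and substitute into the displayed identity. For the lines through $U_4$ and for $\cL_4,\cL_4'$ these distributions are exactly the preliminary facts recorded at the opening of the proof of Lemma \ref{lemma} (for instance a line of $\cL_4'$ carries one point of $\cC$, one point of type $\cC$, and $(q-1)/2$ points of each of types $\cI$ and $\cE$; a line of $\cL_1'$ carries $U_4$, one point of $\cC$, and $q-1$ points of type $\cC$); for $\cL_1,\cL_2,\cL_3$ they are the standard tangent/external/secant incidence counts of a conic in the plane $\pi$ (a tangent meets $\cC$ once and otherwise consists of external points, an external line splits as $(q+1)/2$ internal and $(q+1)/2$ external points, a secant meets $\cC$ twice with the remaining $q-1$ points split evenly). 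In each of the four $\cA$-cases the point-sum collapses to $q^2+q$ and, after the $-q$ correction, to $+q^2$; in each of the four $\cB$-cases it collapses to $-q^2-q$ and, after the $+q$ correction, to $-q^2$, which is the assertion.

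The load-bearing feature is that a conic point has $g=q^2$, so the entire discrepancy is essentially concentrated at the single point of $\cC$ on $\ell$ (or the two such points for a secant), balanced against the negative contributions of the external and type-$\cE$ points; the genuinely error-prone step is the bookkeeping of the per-point contributions together with the self-count correction $-q[\ell\in\cA]+q[\ell\in\cB]$, which must be tracked carefully. As a consistency check I would verify that the very same $g(P)$-table, with correction term $0$, reproduces $|\cA_\ell|=|\cB_\ell|$ in every case treated in Proposition \ref{prop1}.
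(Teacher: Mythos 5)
Your proposal is correct: all seven values of $g(P)$ and all eight line cases check out, and the self-intersection correction $-q\,[\ell\in\cA]+q\,[\ell\in\cB]$ is exactly what is needed to pass from the point-by-point count to $|\cA_\ell|$ and $|\cB_\ell|$ as defined. This is essentially the paper's own argument --- a case analysis over the eight $G$-orbits composing $\cA\cup\cB$, double-counting incidences along $\ell$ via Lemma \ref{lemma} --- merely repackaged by taking the alternating sum $g(P)=a(P)-b(P)$ per point type before summing over $\ell$, which tidies the bookkeeping and makes explicit the self-count that the paper absorbs into its values $n_i$, $n_i'$.
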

\begin{proof}
If $\ell \in \cL_1$, then $\ell$ consists of $q$ points of $\cE$ and one point of $\cC$. Taking into account Lemma~\ref{lemma}, we have that $n_1 = q$, $n_2 = q(q-1)/2$, $n_3 = q(q+1)/2$, $n_4 = 2(q^2-q)$, $n_1' = 1$, $n_2' = 0$, $n_3' = q$, $n_4' = q^2-q$. Hence $|\cA_\ell| = (3q^2-q+2)/2$ and $|\cB_\ell| = q^2+(3q^2-q+2)/2$.

If $\ell \in \cL_2$, then $\ell$ contains $(q+1)/2$ points of $\cE$ and $(q+1)/2$ points of $\cI$. In this case it turns out that $n_1 = q+1$, $n_2 = q(q-1)/2$, $n_3 = q(q+1)/2$, $n_4 = q^2-1$, $n_1' = 0$, $n_2' = (q+1)/2$, $n_3' = (q+1)/2$, $n_4' = 0$ and hence $|\cA_\ell| = (q+1)^2/2$ and $|\cB_\ell| = q^2+(q+1)^2/2$.

If $\ell \in \cL_3$, then $\ell$ has $(q-1)/2$ points of $\cE$, $(q-1)/2$ points of $\cI$ and two points of $\cC$. In this case it follows that $n_1 = q+1$, $n_2 = q(q-1)/2$, $n_3 = q(q+1)/2$, $n_4 = (q-1)^2$, $n_1' = 2$, $n_2' = (q-1)/2$, $n_3' = (q-1)/2$, $n_4' = 2(q^2-q)$ and hence $|\cA_\ell| = q^2 + (3q^2-2q+3)/2$ and $|\cB_\ell| = (3q^2-2q+3)/2$.

If $\ell \in \cL_4$, then in $\ell$ there are $q-1$ points of type $\cE$, one point of type $\cC$ and one point of $\cE$. In this case we obtain $n_1 = 2$, $n_2 = (q-1)/2$, $n_3 = (q-1)/2$, $n_4 = 2(q^2-q)$, $n_1' = 1$, $n_2' = 0$, $n_3' = q$, $n_4' = q^2-q+1$ and hence $|\cA_\ell| = (2q^2-q+3)/2$ and $|\cB_\ell| = q^2+(2q^2-q+3)/2$.

If $\ell \in \cL_1'$, then, apart from $U_4$, $\ell$ consists of $q-1$ points of type $\cC$ and one point of $\cC$. In this case we have that $n_1 = 1$, $n_2 = 0$, $n_3 = q$, $n_4 = q^2-q$, $n_1' = q+1$, $n_2' = q(q-1)/2$, $n_3' = q(q+1)/2$, $n_4' = 2(q^2-q)$. Hence $|\cA_\ell| = q^2 + (3q^2-q+2)/2$ and $|\cB_\ell| = (3q^2-q+2)/2$.

If $\ell \in \cL_2'$, then, apart from $U_4$, $\ell$ contains $q-1$ points of type $\cI$ and one point of $\cI$. In this case it turns out that $n_1 = 0$, $n_2 = (q+1)/2$, $n_3 = (q+1)/2$, $n_4 = 0$, $n_1' = q+1$, $n_2' = q(q-1)/2$, $n_3' = q(q+1)/2$, $n_4' = q^2-1$ and hence $|\cA_\ell| = q^2 + (q+1)^2/2$ and $|\cB_\ell| = (q+1)^2/2$.

If $\ell \in \cL_3'$, then, apart from $U_4$, $\ell$ has $q-1$ points of type $\cE$ and one point of $\cE$. In this case it follows that $n_1 = 2$, $n_2 = (q-1)/2$, $n_3 = (q-1)/2$, $n_4 = 2(q^2-q)$, $n_1' = q+1$, $n_2' = q(q-1)/2$, $n_3' = q(q+1)/2$, $n_4' = (q-1)^2$ and hence $|\cA_\ell| = (3q^2-2q+3)/2$ and $|\cB_\ell| = q^2 + (3q^2-2q+3)/2$.

If $\ell \in \cL_4'$, then, apart from $U_4$, in $\ell$ there are $(q-1)/2$ points of type $\cI$, $(q-1)/2$ points of type $\cE$, one point of type $\cC$ and one point of $\cC$. In this case we obtain $n_1 = 1$, $n_2 = 0$, $n_3 = q$, $n_4 = q^2-q+1$, $n_1' = 2$, $n_2' = (q-1)/2$, $n_3' = (q-1)/2$, $n_4' = 2(q^2-q)$ and hence $|\cA_\ell| = q^2 + (2q^2-q+3)/2$ and $|\cB_\ell| = (2q^2-q+3)/2$.
\end{proof}

We are ready to prove the main result of this section.

\begin{theorem}\label{main}
If $q \equiv 1 \pmod{4}$, then there exists a Cameron--Liebler line class $\cL$ such that $\cA \subset \cL$ and $|\cB \cap \cL| = 0$. 
\end{theorem}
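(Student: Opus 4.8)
The plan is to realize $\cL$ as a Bruen--Drudge line class built on one elliptic member of the pencil $\cP$, and then to locate each of the eight line--orbits occurring in $\cA$ and $\cB$ relative to that quadric. Concretely, I would fix a non--square $\mu\in\GF(q)$, so that $\cQ_\mu$ is an elliptic quadric of $\cP$ with $\cQ_\mu\cap\pi=\cC$, and take $\cL:=\cT_n\cup\cS_2$, the tangent lines meeting $\cQ_\mu$ in the non--square locus $O_n$ together with the secant lines to $\cQ_\mu$. By the description of the Bruen--Drudge family recalled in Section \ref{known}, any gluing of one tangent type with one of $\cS_1,\cS_2$ is a Cameron--Liebler line class with parameter $(q^2+1)/2$, so $\cL$ is such a class.

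Next I would classify the six non--planar-tangent orbits. The three orbits $\cL_2',\cL_3,\cL_4'\subset\cA$ are secant to $\cQ_\mu$: $\cL_3\subset\pi$ is secant to $\cC=\cQ_\mu\cap\pi$; $\cL_2'$ consists of lines through $U_4$ meeting $\pi$ in $\cI$, hence secant to the elliptic $\cQ_\mu$ by Corollary \ref{cono}; and $\cL_4'$ is secant to every quadric of $\cP$ by Corollary \ref{secant}. Dually, the three orbits $\cL_2,\cL_3',\cL_4\subset\cB$ are external: $\cL_2\subset\pi$ is external to $\cC$; $\cL_3'$ consists of lines through $U_4$ meeting $\pi$ in $\cE$, which by Corollary \ref{cono} are secant to every hyperbolic member and, passing through $U_4\notin\cQ_\mu$, therefore miss $\cQ_\mu$; and $\cL_4$ is external to every elliptic member by Corollary \ref{secant}. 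Thus $\cL_2',\cL_3,\cL_4'\subset\cS_2\subset\cL$, while $\cL_2,\cL_3',\cL_4\subset\cS_1$ lie outside $\cL$.

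The decisive point is the two orbits of tangent lines, $\cL_1'\subset\cA$ and $\cL_1\subset\cB$. A line of $\cL_1'$ joins $U_4=(0,0,0,1)$ to a point $T\in\cC$, and evaluating $Q_\mu$ along $sU_4+tT$ gives $Q_\mu=\mu s^2$, a non--square times a square, so every point other than $T$ lies in $O_n$ and $\cL_1'\subset\cT_n\subset\cL$. A line of $\cL_1$ lies in $\pi$ and is tangent to $\cC$; on the representative $\{X_1=X_4=0\}$, tangent at $U_3$, one has $Q_\mu=-X_2^2$, which is a non--zero square \emph{precisely because} $q\equiv 1\pmod 4$ makes $-1$ a square, so this line is of type $\cT_s$. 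A direct matrix computation with the elements of Lemma \ref{pencil} shows $Q_\mu(Mx)=(ad-bc)^2Q_\mu(x)$, so $G$ scales $Q_\mu$ by a square and hence preserves the partition $O_s\cup O_n$ and the splitting $\cT_s\cup\cT_n$; since $\cL_1$ and $\cL_1'$ are single $G$--orbits, the type computed on one representative propagates to the whole orbit, giving $\cL_1\subset\cT_s$ and $\cL_1'\subset\cT_n$. Combining everything, $\cA\subset\cT_n\cup\cS_2=\cL$ while $\cB\subset\cT_s\cup\cS_1$ is disjoint from $\cL$, which is exactly the assertion.

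I expect the tangent--line analysis to be the only real obstacle: one must verify that the two tangent orbits $\cL_1$ and $\cL_1'$ fall into \emph{opposite} halves $\cT_s,\cT_n$ (if they landed in the same half, $\cA$ and $\cB$ could not be separated by a single Bruen--Drudge class), and it is exactly the hypothesis $q\equiv 1\pmod 4$, through the sign in $Q_\mu=-X_2^2$ on the planar tangents versus $Q_\mu=\mu s^2$ on the cone generators, that forces this opposition. The secant/external parts are immediate from Corollaries \ref{cono} and \ref{secant}.
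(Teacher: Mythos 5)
Your proposal is correct and follows essentially the same route as the paper: realize $\cL$ as a Bruen--Drudge class $\cT_n\cup\cS_2$ built on an elliptic member of the pencil, dispose of the six secant/external orbits via Corollaries \ref{cono} and \ref{secant}, and isolate the condition $q\equiv 1\pmod 4$ in the evaluation $-X_2^2$ on the planar tangents versus $\mu s^2$ on the cone generators (the paper phrases this as $F(U_2)=-1$ and $F(U_4)=\bar\lambda$, together with $G$-transitivity on $\cE$). Your explicit check that $G$ scales the form by the square $(ad-bc)^2$ makes precise an invariance the paper uses implicitly, but the argument is the same.
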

\begin{proof}
Let ${\bar \lambda}$ be a fixed non--square element of $\GF(q)$ and consider the elliptic quadric $\cQ_{\bar \lambda}$ of $\cP$. Let $F$ be the quadratic form defining $\cQ_{\bar \lambda}$ and denote with $O_s$, $O_n$ the points of $\PG(3,q)$ such that the evaluation of the quadratic form $F$ is a non--zero square or a non--square in $\GF(q)$, respectively. Recall that a line that is tangent to $\cQ_{\bar \lambda}$ contains either $q$ points of $O_s$ or $q$ points of $O_n$. Let $\cL$ be the Cameron--Liebler line class of $\PG(3,q)$ obtained by joining the tangent lines to $\cQ_{\bar \lambda}$ containing $q$ points of $O_n$, say $\cT$, and the secant lines to $\cQ_{\bar \lambda}$, say $\cS$. Note that $U_4 \in O_n$, since $F(U_4) = {\bar \lambda}$; on the other hand $\cE \subset O_s$ if and only if $q \equiv 1 \pmod{4}$. To see this fact it is enough to show that the point $U_2 \in \cE$ belongs to $\cO_s$ if and only if $q \equiv 1 \pmod{4}$, see Remark \eqref{tra}, which holds true since $F(U_2) = -1$. It follows that, if $q \equiv 1 \pmod{4}$, the set $\cT$ contains $\cL_1'$, $(q-1)/2$ orbits of type {\em 3)} and one orbit of type {\em 4)}, whereas the set $\cS$ contains $\cL_2'$, $\cL_3$, $\cL_4'$, $(q-3)/2$ orbits of type {\em 4)} and $(q-1)/2$ orbits of type {\em 5)}. In particular $\cA = \cL_1' \cup \cL_2' \cup \cL_3 \cup \cL_4'$ is contained in $\cL$ and $\cB = \cL_1 \cup \cL_2 \cup \cL_3' \cup \cL_4$ is disjoint from $\cL$, as required.    
\end{proof}

\section{The isomorphism problem}

In this section we assume $q \equiv 1 \pmod{4}$ and we keep the notation used above. According to Theorem \ref{main}, there exists a Cameron--Liebler line class $\cL$ such that $\cA \subset \cL$ and $|\cB \cap \cL| = 0$. In particular $\cL = \cT \cup \cS$, where $\cT$ is the set of tangent lines to $\cQ_{\bar \lambda}$ containing $q$ points of $O_n$ and $\cS$ is the set of secant lines to $\cQ_{\bar \lambda}$. Here $\cQ_{\bar \lambda}$ is a fixed elliptic quadric of $\cP$. Hence $\cL$ is equivalent to a Cameron--Liebler line class constructed by Bruen and Drudge. 

\begin{remark}\label{oss}
There are either $q^2 + (q+1)/2$ or $(q^2+q)/2+1$ or $(q^2+q)/2+q$ lines of $\cL$ through a point $P$ of $\PG(3,q)$, according as $P \in \cQ_{\bar \lambda}$, $O_n$ or $O_s$, respectively. On the other hand, a plane $\sigma = P^{\perp_{\bar \lambda}}$ of $\PG(3,q)$ contains either $(q+1)/2$ or $(q^2+q)/2$ or $(q^2+q)/2+q+1$, according as $P \in \cQ_{\bar \lambda}$, $O_n$ or $O_s$, respectively. Note that $|P^\perp_{\bar \lambda} \cap \cC| = 0,1,2$, depending on $P$ is a point of type $\cE, \cC, \cI$, respectively.
\end{remark}

From Proposition \ref{prop1}, Proposition \ref{prop2}, Theorem \ref{main} and Theorem \ref{cl}, we have that ${\bar \cL} = \cL \setminus \cA \cup \cB$ is a Cameron--Liebler line class with parameter $(q^2+1)/2$. The main goal of this section will be to show that $\cL$ is not isomorphic to one of the previously known examples of Cameron--Liebler line classes with parameter $(q^2+1)/2$, whenever $q \ge 9$. In order to establish this result we investigate the characters of ${\bar \cL}$ with respect to line--stars and line--sets in planes of $\PG(3,q)$.  

\begin{prop}\label{char1}
The characters of ${\bar \cL}$, with respect to line--stars of $\PG(3,q)$ are:
$$
\frac{q+1}{2}, \frac{q^2+q}{2}-2(q+1), \frac{q^2+q}{2}-(q+1), \frac{q^2+q}{2}, \frac{q^2+q}{2}+q+1, q^2 - \frac{q+3}{2}.
$$
\end{prop}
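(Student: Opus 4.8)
The plan is to compute, for every point $P\in\PG(3,q)$, the number of lines of $\bar{\cL}$ through $P$, and then to collect the distinct totals. Since $\cA\subset\cL$ and $\cB\cap\cL=\emptyset$, for each $P$ we have
$$
|\{r\in\bar{\cL}:P\in r\}|=|\{r\in\cL:P\in r\}|-|\{r\in\cA:P\in r\}|+|\{r\in\cB:P\in r\}|.
$$
The first term depends only on the position of $P$ relative to $\cQ_{\bar\lambda}$: by Remark \ref{oss} it equals $q^2+(q+1)/2$, $(q^2-q)/2+q+1$, or $(q^2-q)/2$ according as $P\in\cQ_{\bar\lambda}$, $P\in O_n$, or $P\in O_s$. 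The remaining two terms depend only on the $G$-type of $P$ and are obtained by summing, over the four orbits constituting $\cA$ (resp. $\cB$), the incidence numbers listed in Lemma \ref{lemma}.

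First I would run through the seven classes of Lemma \ref{lemma} — the vertex $U_4$, the orbits $\cC,\cI,\cE$ of $\pi$, and the points of type $\cC$, type $\cI$, type $\cE$ off $\pi$ — and in each evaluate $|\{r\in\cA:P\in r\}|-|\{r\in\cB:P\in r\}|$. Next I would fix the position of each class relative to $\cQ_{\bar\lambda}$ by evaluating the defining form $F$. For five of the classes this is uniform: $\cC\subset\cQ_{\bar\lambda}$ as the base locus of $\cP$; $F(U_4)=\bar\lambda$ is a non-square, $F$ is a non-square on $\cI$, and $F=\bar\lambda s^2$ along a generator $U_4Z$ with $Z\in\cC$, whence $U_4$, $\cI$ and the points of type $\cC$ all lie in $O_n$; and, because $q\equiv1\pmod4$, $\cE\subset O_s$ exactly as in the proof of Theorem \ref{main}. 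Substituting the three ingredients then produces a single character from each: $(q+1)/2$ from $\cC$, the common value $(q^2+q)/2$ from $U_4$, $\cI$ and type $\cC$, and $(q^2+q)/2+q+1$ from $\cE$.

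The crux is that the two remaining classes are not contained in a single one of $\cQ_{\bar\lambda}$, $O_s$, $O_n$, so each will yield several characters. Here I would invoke Corollary \ref{cono}: a line $U_4Z$ with $Z\in\cI$ is secant to $\cQ_{\bar\lambda}$, while a line $U_4Y$ with $Y\in\cE$ is external to it. Restricting $F$ to such a line gives a binary quadratic form, isotropic in the first case and anisotropic in the second; in either case its nonzero values split evenly between squares and non-squares, so the $q-1$ points of the line lying off $\pi$ and distinct from $U_4$ meet $O_s$ and $O_n$ in equal numbers, and in the secant case the line meets $\cQ_{\bar\lambda}$ in two further points. Removing the known points $U_4\in O_n$ and $Z\in O_n$ (resp. $Y\in O_s$), a short count shows that the points of type $\cI$ realise all three positions and the points of type $\cE$ realise $O_s$ and $O_n$; this is the one place where $q\ge9$ is needed, to guarantee that the subfamily of type-$\cI$ points lying in $O_n$ (of cardinality $(q-5)/2$ on each such line) is non-empty. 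Feeding these positions into the displayed identity gives $q^2-(q+3)/2$, $(q^2+q)/2-2(q+1)$ and $(q^2+q)/2-(q+1)$ from the points of type $\cI$, and recovers $(q^2+q)/2$ and $(q^2+q)/2+q+1$ from the points of type $\cE$.

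Assembling the distinct totals over all seven classes yields exactly the six numbers in the statement, which completes the proof. The main obstacle is precisely this splitting analysis: controlling how the off-$\pi$ points of a secant or external line through $U_4$ distribute among $\cQ_{\bar\lambda}$, $O_s$ and $O_n$. This is where Corollary \ref{cono} and the hypothesis $q\equiv1\pmod4$ (through the fact that $-1$ is a square, which dictates the isotropy type of $F$ on these lines) are indispensable.
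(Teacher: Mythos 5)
Your proof is correct and takes essentially the same approach as the paper's: the same counting identity for the number of lines of $\bar{\cL}$ through a point, evaluated via Remark \ref{oss} and Lemma \ref{lemma} over the $G$--orbits, with the distribution of the type-$\cI$ and type-$\cE$ points among $\cQ_{\bar\lambda}$, $O_n$, $O_s$ (which the paper simply asserts as point counts) justified by your isotropic/anisotropic binary-form argument along the lines through $U_4$. One remark: your value $(q^2-q)/2$ for the number of lines of $\cL$ through a point of $O_s$ is the correct one --- Remark \ref{oss} as printed states $(q^2+q)/2+q$, which is a typo for $(q^2+q)/2-q$, and the paper's own subsequent computations tacitly use the corrected value.
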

\begin{proof}
From the proof of Theorem \ref{main}, we have that $\cE \subset O_s$, $\cI \subset O_n$ and $U_4 \in O_n$. Let $P$ be a point of $\PG(3,q)$. Note that if $x$ is the number of lines of $\cL$ through the point $P$, then there are exactly $y$ lines of ${\bar \cL}$ containing $P$, where 
\begin{equation}\label{eq1}
y = x - m_1' - m_2' - m_3 - m_4' + m_1 + m_2 + m_3' + m_4. 
\end{equation}
Here $m_i$, $m_i'$ denote the number of lines of $\cL_i$, $\cL_i'$, respectively, through the point $P$, $1 \le i \le 4$. Taking into account \eqref{eq1}, Remark \eqref{oss} and Lemma \ref{lemma}, we have that if $P = U_4$, then $y = (q^2+q)/2$. Analogously, if $P \in \cC$, $\cE$, $\cI$, then $y = (q+1)/2$, $(q^2+q)/2+q+1$, $(q^2+q)/2$, respectively. 

Assume that $P$ is a point of type $\cI$. Then there are three possibilities: either $P$ belongs to $\cQ_{\bar \lambda}$ and there are $q^2-q$ such points, or $P \in O_n$ and there are $q(q-1)(q-5)/4$ such points, or $P \in O_s$ and there are $q(q-1)^2/4$ such points. It turn out that if $P \in \cQ_{\bar \lambda}$, $O_n$, $O_s$, then $y = q^2-(q+3)/2$, $(q^2+q)/2-(q+1)$, $(q^2+q)/2-2(q+1)$, respectively.     

Assume that $P$ is a point of type $\cE$. Then there are two possibilities: either $P$ belongs to $O_n$ or to $O_s$. In both cases there are $(q^3-q)/4$ such points. It follows that if $P \in O_n$, then $y = (q^2+q)/2+(q+1)$, whereas if $P \in O_s$, then $y = (q^2+q)/2$.

Assume that $P$ is a point of type $\cC$. Then $P \in O_n$ and $y = (q^2+q)/2$.
\end{proof}

\begin{prop}\label{char2}
The characters of ${\bar \cL}$, with respect to line--sets in planes of $\PG(3,q)$ are:
$$
\frac{3q+5}{2}, \frac{q^2+q}{2}-q, \frac{q^2+q}{2}+1, \frac{q^2+q}{2}+q+2, \frac{q^2+q}{2}+2q+3, q^2 + \frac{q+1}{2}.
$$
\end{prop}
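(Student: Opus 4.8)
The plan is to argue dually to Proposition~\ref{char1}, replacing line-stars by line-sets in planes. Fix a plane $\sigma$, let $N$ denote the number of lines of $\cL$ contained in $\sigma$, and write $k_i$ (resp.\ $k_i'$) for the number of lines of $\cL_i$ (resp.\ $\cL_i'$) contained in $\sigma$, $1 \le i \le 4$. Since $\cA \subset \cL$ and $\cB \cap \cL = \emptyset$ by Theorem~\ref{main}, passing from $\cL$ to $\bar{\cL}$ inside $\sigma$ deletes every line of $\cA$ lying in $\sigma$ and inserts every line of $\cB$ lying in $\sigma$, so the number of lines of $\bar{\cL}$ in $\sigma$ equals
$$
N - (k_1' + k_2' + k_3 + k_4') + (k_1 + k_2 + k_3' + k_4),
$$
the planar analogue of~\eqref{eq1}. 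The quantities $k_i,k_i'$ are read off directly from Lemma~\ref{lemma1} in each of its seven cases, so the only remaining input is the value of $N$.

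To determine $N$ I would write $\sigma = P^{\perp_{\bar\lambda}}$ for its pole $P$ with respect to $\cQ_{\bar\lambda}$; then Remark~\ref{oss} gives $N = (q+1)/2$, $(q^2+q)/2$, or $(q^2+q)/2 + q + 1$ according as $P \in \cQ_{\bar\lambda}$, $P \in O_n$, or $P \in O_s$. It remains to match the intrinsic classification of $\sigma$ in Lemma~\ref{lemma1} with the position of $P$. Since $\cQ_{\bar\lambda} \cap \pi = \cC$, the polarity $\perp_{\bar\lambda}$ induces on $\pi$ the polar map of the conic $\cC$, and $r = \sigma \cap \pi$ is the $\cC$-polar line of the projection of $P$ from $U_4$ onto $\pi$. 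Hence $U_4 \in \sigma$ precisely when $P \in \pi$, in which case $|r \cap \cC| = 0,1,2$ corresponds to $P$ internal to, on, or external to $\cC$ (external points having secant polars), that is $P \in \cI \subset O_n$, $P \in \cC \subset \cQ_{\bar\lambda}$, or $P \in \cE \subset O_s$; for $\sigma = \pi$ the pole is $U_4 \in O_n$; and when $U_4 \notin \sigma$ the three values $|r \cap \cC| = 0,1,2$ correspond to $P$ being a point of type $\cI$, $\cC$, or $\cE$.

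With this dictionary the computation splits along the seven cases of Lemma~\ref{lemma1}. For $\sigma = \pi$ and the three families through $U_4$ the square-class of $P$ is forced, so $N$ is determined and substituting the entries of Lemma~\ref{lemma1} i)--iv) into the displayed expression gives $(q^2+q)/2 + 1$, $(q^2+q)/2 + 1$, $q^2 + (q+1)/2$, and $(q^2+q)/2 - q$. For the planes not containing $U_4$ a pole of type $\cC$ always lies in $O_n$, whereas a pole of type $\cI$ may lie in $\cQ_{\bar\lambda}$, $O_n$ or $O_s$ and a pole of type $\cE$ in $O_n$ or $O_s$; feeding the corresponding values of $N$ into cases v)--vii) yields $(3q+5)/2$, $(q^2+q)/2 + q + 2$, $(q^2+q)/2 + 2q + 3$ for type $\cI$, the value $(q^2+q)/2 + 1$ for type $\cC$, and $(q^2+q)/2 - q$, $(q^2+q)/2 + 1$ for type $\cE$. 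That each sub-case is genuinely attained (certainly for $q \ge 9$) follows from the point distribution recorded in the proof of Proposition~\ref{char1}, and collecting the distinct outcomes gives exactly the six claimed characters.

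The main obstacle is the second paragraph: translating the position of $\sigma$ relative to $\pi$ and $U_4$ into the position of its pole $P$, and in particular pairing external points of $\cC$ with secant polar lines rather than the reverse. The one genuinely new phenomenon, compared with the planes through $U_4$, is the splitting of the type-$\cI$ and type-$\cE$ families of planes according to the square-class of their poles; this is precisely what produces the three extra characters $(3q+5)/2$, $(q^2+q)/2 + q + 2$ and $(q^2+q)/2 + 2q + 3$ that do not arise in cases i)--iv). Everything else is the bookkeeping of substituting the entries of Lemma~\ref{lemma1} into the displayed formula.
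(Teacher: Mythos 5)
Your proposal is correct and follows essentially the same route as the paper's proof: write $\sigma = P^{\perp_{\bar\lambda}}$, apply the planar analogue \eqref{eq2} of \eqref{eq1} with the counts of Lemma \ref{lemma1}, read off the number of lines of $\cL$ in $\sigma$ from Remark \ref{oss} according to the position of the pole $P$, and split the cases with $U_4\notin\sigma$ by whether $P$ lies on $\cQ_{\bar\lambda}$ or in $O_n$ or $O_s$. The only difference worth noting is that you derive the dictionary between the type of $P$ and $|r\cap\cC|$ from scratch, correctly pairing type $\cE$ with $|r\cap\cC|=2$ and type $\cI$ with $|r\cap\cC|=0$ (which is what the paper's proof actually uses, even though the last sentence of Remark \ref{oss} states the opposite pairing), and that you explicitly observe each sub-case is attained for $q\ge 9$.
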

\begin{proof}
From the proof of Theorem \ref{main}, we have that $\cE \subset O_s$, $\cI \subset O_n$ and $U_4 \in O_n$. Let $\sigma = P^{\perp_{\bar \lambda}}$ be a plane of $\PG(3,q)$. Note that if $x$ is the number of lines of $\cL$ contained in $\sigma$, then there are exactly $y$ lines of ${\bar \cL}$ contained in $\sigma$, where 
\begin{equation}\label{eq2}
y = x - t_1' - t_2' - t_3 - t_4' + t_1 + t_2 + t_3' + t_4. 
\end{equation}
Here $t_i$, $t_i'$ denote the number of lines of $\cL_i$, $\cL_i'$, respectively, contained in $\sigma$, $1 \le i \le 4$. Taking into account \eqref{eq2}, Remark \eqref{oss} and Lemma \ref{lemma1}, we have that if $P = U_4$, i.e. $\sigma = \pi$, then $y = (q^2+q)/2+1$. Analogously, if $P \in \cC$, $\cE$, $\cI$, then $\sigma$ contains $U_4$ and meets $\cC$ in $1$, $2$ or no point, respectively. Thus $y = q^2+ (q+1)/2$, $(q^2+q)/2-q$, $(q^2+q)/2+1$, respectively. 

Assume that $P$ is a point of type $\cI$. Then $U_4 \not\in \sigma$ and $|\sigma \cap \cC| = 0$. There are three possibilities: either $P$ belongs to $\cQ_{\bar \lambda}$ or $P \in O_n$ or $P \in O_s$. It turn out that if $P \in \cQ_{\bar \lambda}$, $O_n$, $O_s$, then $y = (3q+5)/2$, $(q^2+q)/2+q+2$, $(q^2+q)/2+2q+3$, respectively.     

Assume that $P$ is a point of type $\cE$. Then $U_4 \not\in \sigma$ and $|\sigma \cap \cC| = 2$. There are two possibilities: either $P$ belongs to $O_n$ or to $O_s$. It follows that if $P \in O_n$, then $y = (q^2+q)/2-q$, whereas if $P \in O_s$, then $y = (q^2+q)/2+1$.

Assume that $P$ is a point of type $\cC$. Then $U_4 \not\in \sigma$, $|\sigma \cap \cC| = 1$ and $P \in O_n$. Thus $y = (q^2+q)/2+1$.
\end{proof}

\begin{theorem}
If $q \ge 9$, the Cameron--Liebler line class ${\bar \cL}$ is not equivalent to one of the previously known examples.
\end{theorem}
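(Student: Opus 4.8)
The plan is to exploit the fact that the \emph{characters} of a Cameron--Liebler line class are invariants. Recall that the line--star characters are the distinct values taken by $|\{r\in\bar{\cL}:P\in r\}|$ as $P$ ranges over the points of $\PG(3,q)$, and the plane characters are the distinct values of $|\{r\in\bar{\cL}:r\subset\sigma\}|$ as $\sigma$ ranges over the planes. Both lists are preserved by the collineation group of $\PG(3,q)$; a duality interchanges the two lists, while complementation sends each character $c$ to $q^2+q+1-c$. In particular the \emph{number} of distinct characters of each kind is preserved under all three operations. By Proposition~\ref{char1} and Proposition~\ref{char2}, for $q\ge 9$ the six values in each of the two lists for $\bar{\cL}$ are pairwise distinct (they are strictly monotone in the displayed ordering), so $\bar{\cL}$ has exactly six line--star characters and exactly six plane characters. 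It then suffices to compare this data, family by family, with the characters recorded in Section~\ref{known}.

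Three of the four families are dispatched by this count. The Bruen--Drudge class $\cL'$ has three characters of each kind, and the first derived class $\cL''$ has five of each kind; since $6$ equals neither $3$ nor $5$, and this count is invariant (also under duality and complementation), $\bar{\cL}$ is equivalent to neither. For the cyclic family, each of $\cX\cup\cY$ and $\cX\cup\cZ$ admits $q^2+q+1$ as a character, whereas every character of $\bar{\cL}$ is strictly smaller: the largest line--star character is $q^2-\frac{q+3}{2}$ and the largest plane character is $q^2+\frac{q+1}{2}$, both less than $q^2+q+1$. Hence $\bar{\cL}$ has no character equal to $q^2+q+1$ and is inequivalent to the cyclic family as well.

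The remaining and genuinely delicate case is the second derived family $\cL'''$, whose characters are only guaranteed to lie in two explicit eight--element sets, so the counting argument no longer separates it from $\bar{\cL}$. Here I would argue by a single distinguishing value. The smallest plane character of $\bar{\cL}$ is $\frac{3q+5}{2}$, attained (in the notation of the proof of Proposition~\ref{char2}) at the points of $\cQ_{\bar\lambda}$ of type $\cI$. I would then verify that $\frac{3q+5}{2}$ coincides with none of the eight possible plane characters of $\cL'''$ and with none of its eight possible line--star characters: each equality, after clearing denominators, reduces to a quadratic in $q$ all of whose integer solutions are smaller than $9$ (the only one with $q\equiv 1\pmod 4$ being $q=5$), while the leftover equations have non--square discriminant. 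Consequently, for $q\ge 9$ the value $\frac{3q+5}{2}$ is a plane character of $\bar{\cL}$ but is neither a plane nor a line--star character of $\cL'''$. Since a collineation would match the plane--character set of $\bar{\cL}$ with that of $\cL'''$, and a duality would match it with the line--star set of $\cL'''$, $\bar{\cL}$ is equivalent to $\cL'''$ under neither.

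I expect the comparison with $\cL'''$ to be the main obstacle: its large admissible character set forbids a purely numerical separation and forces one to isolate an invariant value, here $\frac{3q+5}{2}$, that is provably absent from the tabulated characters of $\cL'''$. It is also precisely this step that fixes the hypothesis $q\ge 9$, the binding coincidence being at $q=5$, where $\frac{3q+5}{2}=\frac{q^2+q}{2}-q=10$; this is the unique admissible modulus $q\equiv 1\pmod 4$ below $9$, so the argument goes through for all remaining $q\ge 9$.
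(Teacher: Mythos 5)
Your proof is correct and follows essentially the same route as the paper: both arguments hinge on the character $\frac{3q+5}{2}$ of $\bar{\cL}$, which for $q\ge 9$ is absent from the (plane and line--star) character lists of $\cL'$, $\cL''$ and $\cL'''$, and on the character $q^2+q+1$ of the cyclic family, which $\bar{\cL}$ lacks. Your use of the number of distinct characters to dispose of $\cL'$ and $\cL''$, and your explicit check of $\frac{3q+5}{2}$ against both character lists of $\cL'''$ to account for dualities, are minor refinements of the paper's more terse comparison.
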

\begin{proof}
From the proof of Proposition \ref{char2}, if $P \in \cQ_{\bar \lambda} \setminus \cC$, through $P$ there pass $(3q+5)/2$ lines of $\bar \cL$. Since, for $q \ge 9$, the value $(3q+5)/2$ does not appear among the characters of $\cL'$, $\cL''$ and $\cL'''$, we may conclude that $\bar \cL$ is distinct from $\cL'$, $\cL''$ and $\cL'''$ (see Section \ref{known}). On the other hand, both examples $\cX \cup \cY$ and $\cX \cup {\cal Z}$ admit $q^2+q+1$ as a character, but from Proposition \ref{char1} and Proposition \ref{char2}, such a value does not appear as a character of $\bar \cL$.   
\end{proof}

%\bigskip
%{\footnotesize
%\noindent\textit{Acknowledgments.}
%This work was supported by Ministry for Education, University and Research of Italy MIUR (Project PRIN 2012 ``Geometric %Structures, Combinatorics and their Applications'') and by the Italian National Group for Algebraic and Geometric Structures and %their Applications (GNSAGA-- INdAM).}

\end{document}